\newtheorem{theorem}{Theorem}[section]
\newtheorem{definition}{Definition}[section]
\newtheorem{lemma}[definition]{Lemma}
\newtheorem{proposition}[definition]{Proposition}
\theoremstyle{remark}
\newtheorem{remark}[definition]{Remark}
\numberwithin{equation}{section}
\newcommand{\norm}[1]{\lVert#1\rVert}
\newcommand{\R}{\mathbb{R}}
\newcommand{\cL}{\mathcal{L}}
\newcommand{\flap}{\mbox{$(-\triangle)^s$}}
\newcommand{\be}{\begin{equation}}
\newcommand{\ee}{\end{equation}}
\newcommand{\bee}{\begin{equation*}}
\newcommand{\eee}{\end{equation*}}
\title{  Boundary regularity  and a priori estimates  for  fractional equations on unbounded domains}
\author{Yahong Guo,  Congming Li, and Yugao Ouyang}
\begin{document}

\maketitle

\begin{abstract}

    In this paper, we study the boundary H\"older regularity for solutions to the fractional Dirichlet problem in unbounded domains with boundary
\begin{equation*}
        \begin{cases}
            \flap u(x) = g(x),&\text{in } \Omega,\\
            u(x)=0, &\text{in }  \Omega^c.\\
        \end{cases}
    \end{equation*}  
   Existing  results 
    rely on the global $L^{\infty}$ norm of solutions to control their boundary $C^s$ norm, which  is insufficient for  blow-up and rescaling analysis  to obtain a priori estimates  in unbounded domains. To overcome this limitation,  we first derive a local version of boundary H\"older regularity for nonnegative solutions in which we replace the global  $L^{\infty}$ norm by only a local $L^{\infty}$ norm. Then as an important application, we establish a priori estimates for nonnegative solutions to a family of nonlinear equations on unbounded domains with boundaries.

 \end{abstract}
 \bigskip

\textbf{Mathematics subject classification (2020):} 35R11, 35B45, 35B65. 
\bigskip

\textbf{Keywords:} fractional equations, boundary regularity estimates, blow-up and re-scaling, a priori estimates.     \\
\medskip

\section{Introduction}

 In this paper, we consider the following fractional problem in  unbounded domain $\Omega$
 \begin{equation}\label{main1}
        \begin{cases}
            \flap u(x) = g(x),&\text{in } \Omega,\\
            u(x)=0, &\text{in }  \Omega^c.\\
        \end{cases}
    \end{equation}
First, we derive a boundary regularity estimate for  solutions   
in terms of only the local $L^{\infty}$ norm of $u$ and $g$. 
Then as an important application, we obtain  a priori estimates for solutions to nonlinear equations on unbounded domains with  boundaries \begin{equation}\label{main2}
\begin{cases}
    \flap u (x) = f(x,u)  , & x\in \Omega,\\
    u (x)  = 0 , &x\in \Omega ^c.
\end{cases}
\end{equation} 
Here, $(-\Delta)^s$  denotes the fractional Laplacian,  defined as
\begin{eqnarray}\label{eq1-1}
\begin{aligned}
(-\Delta)^s u(x)&=C_{n, s}P.V. \int_{\mathbb{R}^n}\frac{u(x)-u(y)}{|x-y|^{n+2s}}dy\\
&=C_{n, s}\lim_{\varepsilon \to 0}\int_{\mathbb{R}^n\backslash B_\varepsilon(0)}\frac{u(x)-u(y)}{|x-y|^{n+2s}}dy,
\end{aligned}
\end{eqnarray}
where $P.V.$
stands for the Cauchy principal value, and $C_{n, s}$ is a
dimensional constant.
In order that the integral on the right hand side of \eqref{eq1-1} is well defined in the classical sense, we require that $u\in C_{loc}^{1, 1}(\Omega)\cap \mathcal{L}_{2s}$, where
$$
\mathcal{L}_{2s}=\left\{u\in L_{loc}^1(\R^n) \,\Big| \int_{\mathbb{R}^n}\frac{|u(x)|}{1+|x|^{n+2s}}dx<\infty \right\}
$$
endowed with norm 

\[\norm{u}_{{\cL}_{2s}}:=\int_{\mathbb{R}^n}\frac{|u(x)|}{1+|x|^{n+2s}}dx.\]

The fractional Laplacian is a nonlocal integro-differential operator that effectively models memory effects and long-range diffusion phenomena (Chaves \cite{Chave1998}; Metzler and Klafter \cite{MK2000}). Specific applications include random walks with memory, Lévy flights \cite{MS1984, SZK1993, N1986}, and kinetic theories of systems exhibiting chaotic \cite{SZ1997, Z2002} and pseudo-chaotic \cite{ZE2001} dynamics. Beyond these, the operator finds significant applications across diverse domains including probability theory and finance, image processing and computer vision, mathematical biology and ecology, as well as geophysics and climate modeling.

 Nonlocal operators, particularly the fractional Laplacian, have attracted considerable attention in the mathematical community owing to their broad applications. For foundational developments in this field, we refer to the seminal work of Caffarelli and Silvestre \cite{CS07}, along with subsequent contributions by  Stinga \cite{Stinga2019}, Dipierro et al. \cite{Dipierro2019}, and the comprehensive monograph by Bucur and Valdinoci \cite{BV2016}. Regarding qualitative properties of solutions, extensive results can be found in \cite{CDQ,CDG,CG,CGLO,CHL17,DQ, FL,FLS}, with specific advances in a priori estimates documented in \cite{CGL,Chen2016directblowup,ouyang2023}.
 
    The regularity properties of solutions to fractional order equations have also been extensively studied. The foundation was laid by Silvestre \cite{Silvestre2007Regularity}, who established Schauder and H\"older estimates for entire solutions. Then the interior type estimates are studied in \cite{Chen2010frac}.  Later, Li and Wu \cite{Li2021Pointwise} established interior pointwise regularity estimates for fractional equations, a result analogous in spirit to Caffarelli's seminal work \cite{Caffarelli1989Ann} on pointwise estimates for viscosity solutions of fully nonlinear elliptic equations.
    For boundary regularity, Ros-Oton and Serra \cite{Rosoton2014Dirichlet} proved optimal regularity for the Dirichlet problem in domains satisfying an exterior ball condition (with zero exterior data), while their follow-up work \cite{Rosoton2014extremal} treated general exterior data in $C^{1,1}$ domains.
    The regularity theory has been further developed from different perspectives: Servadei and Valdinoci \cite{Servadei2014weak} investigated both weak and viscosity solutions, while Caffarelli-Stinga \cite{CS2016} and Stinga-Torrea \cite{ST2010} provided additional important regularity criteria.
    For more results concerning the nonlocal elliptic equations, please see \cite{Fall2019regulariy,Fall2020reg,Dyda2020reg}, while for fractional parabolic equations, we refer the readers to \cite{CGL,Chen2010Heat,Bogdan2010heat,Fernandezreal2016boundary,Stinga2017reg,Kassmann2024parabolic}.  

In particular, in \cite{chen2025refinedregularitynonlocalelliptic}, the authors establish interior H\"older and Schauder regularity estimates for fractional equation \eqref{main1}. This makes it possible to derive a priori estimate for nonnegative solutions of \eqref{main2} when $\Omega=\R^n$, an  unbounded domain {\em without boundary}.

Then for general unbounded domains  with  boundaries, can one derive a priori estimates for the solutions? 

This requires boundary H\"older estimates in terms of only the local $L^{\infty}$ norm of the solutions.

However, in the existing results, such as the ones in \cite{BDGQ2018} and \cite{Barrios2019note}, in order to obtain the $C^s$ boundary H\"older continuity of the solutions for \eqref{main1} when $\Omega=\R^n_+$, the authors  require that solution $u$ is globally bounded. Their proof mainly based on the proposition 1.1 in \cite{Rosoton2014Dirichlet}, in which  an optimal boundary H\"older estimate in terms of $\norm{g}_{L^\infty(\Omega)}$ is established for  a {\em bounded domian} $\Omega$ with zero exterior Dirichlet condition. It is well known that in this situation, one can construst a supersolution  to show that the global $L^{\infty}$ norm of $u$ is bounded by $\norm{g}_{L^\infty(\Omega)}$. When $\Omega$ is an {\em unbounded domain}, it is evident   from their proof that  in order to obtain local boundary H\"older continuity, the solution $u$ is required to be globally bounded, as indicated by \cite{BDGQ2018} and \cite{Barrios2019note}. For general unbounded domains, Dipierro, Soave and Valdinoci in \cite{DSV17} obtain $C^{\alpha}$ boundary regularity for some $0<\alpha<s$ under the condition that the solution $u$ of \eqref{main1} be globally bounded. 

With these existing boundary  regularity estimates, it is inadequate to carry out blow up and rescaling arguments on unbounded domains aimed at obtaining a priori estimates, because the rescaled functions may be globally unbounded. 

One of our main result is to establish the local version of the boundary regularity for nonnegative solutions,  in which  we use {\em only the local $L^{\infty}$  norm of $u$ instead of the global ones} to control its local H\"older norm up to the boundary.
\begin{theorem}\label{1bdry C^s}
    Suppose $\Omega$ is a locally $C^{1,1}$ domain, $0<s<1$, $g\in L^\infty(\Omega\cap B_4)$ and $u$ is a nonnegative classical solution of 
    \begin{equation}
        \begin{cases}
            \flap u = g,&\text{in } \Omega\cap B_4,\\
            u=0, &\text{in } B_4\backslash \Omega.\\
        \end{cases}
    \end{equation}
    Then $u\in C^{s}(\Omega\cap B_{1/2})$ and 
    \[
    \norm{u}_{C^s(\Omega\cap B_{1/2})}\le C(\norm{g} _{L^\infty(\Omega\cap B_4)} + \norm{u} _{L^\infty(\Omega\cap B_4)} ),
    \]
    where $C$ depends on $n,s$ and $C^{1,1}_{loc}$ norm of $\partial\Omega$, and $B_r$ denotes a ball centered at any fixed point on the boundary $\partial\Omega$ with radius $r$.
\end{theorem}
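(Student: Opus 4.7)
My plan is to reduce matters, via a cutoff and a new tail estimate, to the known boundary $C^{s}$ regularity theory for \emph{globally} bounded solutions in bounded $C^{1,1}$ domains. The new ingredient that bypasses the global $L^{\infty}$ hypothesis used in the literature is a tail bound derived from the nonnegativity of $u$: namely, that $\int_{|y|>3} u(y)|y|^{-n-2s}\,dy$ is controlled by the local $L^{\infty}$ norms of $u$ and $g$. Once this is in hand, multiplying $u$ by a cutoff gives a globally bounded auxiliary function $v$ with bounded $(-\Delta)^{s}v$, to which the existing theory applies.

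To establish the tail estimate I would fix a nonnegative, nontrivial $\phi\in C_{c}^{\infty}(\Omega\cap B_{1})$, which exists by the interior-ball condition implied by local $C^{1,1}$ regularity of $\partial\Omega$. Since $u\in\cL_{2s}$, the standard self-adjointness identity yields $\int u\,(-\Delta)^{s}\phi\,dy=\int g\phi\,dy$. For $|y|>3$, $\phi$ vanishes near $y$ and the kernel representation $-(-\Delta)^{s}\phi(y)=C_{n,s}\int\phi(z)|y-z|^{-n-2s}\,dz$ combined with $|y-z|\asymp|y|$ gives the two-sided asymptotic $-(-\Delta)^{s}\phi(y)\asymp\norm{\phi}_{L^{1}}/|y|^{n+2s}$. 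Exploiting $u\ge0$ and the sign of $(-\Delta)^{s}\phi$ outside $\mathrm{supp}(\phi)$, splitting the identity into the contributions from $B_{3}$ and $B_{3}^{c}$ and rearranging gives the desired bound, with constant depending only on $n$, $s$, and the local $C^{1,1}$ norm of $\partial\Omega$ (via the choice of $\phi$).

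Next, let $\eta\in C_{c}^{\infty}(B_{3})$ with $\eta\equiv1$ on $B_{2}$ and set $v:=u\eta$. Then $v\ge0$, $\norm{v}_{L^{\infty}(\mathbb{R}^{n})}\le\norm{u}_{L^{\infty}(\Omega\cap B_{4})}$, $\mathrm{supp}(v)\subset\Omega\cap B_{3}$, $v$ vanishes on $\Omega^{c}$, and $v\equiv u$ on $\Omega\cap B_{1/2}$. For $x\in\Omega\cap B_{3/2}$, since $\eta\equiv1$ near $x$,
\[
(-\Delta)^{s}v(x)=g(x)+C_{n,s}\int_{B_{2}^{c}}\frac{u(y)(1-\eta(y))}{|x-y|^{n+2s}}\,dy,
\]
where the integral splits into a piece over $\{2\le|y|\le 3\}$, controlled by $C\norm{u}_{L^{\infty}(\Omega\cap B_{4})}$ since $|x-y|\ge 1/2$, and a piece over $\{|y|>3\}$, controlled by the tail estimate since $|x-y|\asymp|y|$. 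Hence $\norm{(-\Delta)^{s}v}_{L^{\infty}(\Omega\cap B_{3/2})}\le C(\norm{u}_{L^{\infty}(\Omega\cap B_{4})}+\norm{g}_{L^{\infty}(\Omega\cap B_{4})})$. I would then construct a bounded $C^{1,1}$ auxiliary domain $\tilde\Omega$ with $\tilde\Omega\cap B_{2}=\Omega\cap B_{2}$ and $\mathrm{supp}(v)\subset\tilde\Omega$ (obtained by smoothly closing off $\Omega\cap B_{R}$ for large $R$ outside $B_{2}$), so that $v$ solves a Dirichlet problem on $\tilde\Omega$ with zero exterior data and $L^{\infty}$ right-hand side. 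Proposition~1.1 of \cite{Rosoton2014Dirichlet} then yields $\norm{v}_{C^{s}(\mathbb{R}^{n})}\le C(\norm{u}_{L^{\infty}(\Omega\cap B_{4})}+\norm{g}_{L^{\infty}(\Omega\cap B_{4})})$, and since $v=u$ on $\Omega\cap B_{1/2}$ the claim follows.

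The main obstacle is the tail estimate: converting a pointwise equation into global integrability control using only nonnegativity is a delicate duality argument, and one must verify carefully that the test-function identity holds classically under the hypotheses $u\in C_{loc}^{1,1}(\Omega)\cap\cL_{2s}$. A secondary technicality is checking that $(-\Delta)^{s}v$ remains bounded on all of $\tilde\Omega$ and not merely in $\Omega\cap B_{3/2}$; this follows from the smoothness of $\eta$, the $C^{1,1}_{loc}(\Omega)$ regularity of $u$ in the interior, and the fact that on $\tilde\Omega\setminus\mathrm{supp}(v)$ the quantity $(-\Delta)^{s}v$ is represented as an absolutely convergent integral of the bounded compactly supported function $v$.
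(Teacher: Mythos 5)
Your tail estimate is a nice idea and is essentially sound: the duality argument with a fixed bump $\phi\in C_c^\infty(\Omega\cap B_1)$ does control $\int_{|y|>3}u(y)|y|^{-n-2s}\,dy$ by the local norms of $u$ and $g$ (using $u\ge 0$ and the sign of $(-\Delta)^s\phi$ off $\mathrm{supp}\,\phi$), and it plays exactly the role that the weak Harnack inequality (Proposition \ref{Harnack thm}) plays in the paper's proof of \eqref{a2}. The genuine gap is in the reduction step, namely the claim that $(-\Delta)^s v$ is bounded on \emph{all} of $\tilde\Omega$. The problematic region is not the one you flag as a ``secondary technicality'' (points far from $\mathrm{supp}\,v$, or interior points where $u\in C^{1,1}_{loc}$), but the portion of $\partial\Omega$ lying inside the transition zone of the cutoff, roughly $\partial\Omega\cap(B_3\setminus B_2)$. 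If $\tilde\Omega$ is built so that $\mathrm{supp}\,v\subset\tilde\Omega$, then $\tilde\Omega$ must swallow that piece of $\partial\Omega$, and at points $x\in\tilde\Omega\setminus\overline\Omega$ at distance $\delta$ from it one has $(-\Delta)^s v(x)=-C_{n,s}\int v(y)|x-y|^{-n-2s}dy$, which for the model admissible data $\Omega=\{x_n>0\}$, $u=(x_n)_+^s$, $g=0$ behaves like $-\delta^{-s}$; so $(-\Delta)^s v\notin L^\infty(\tilde\Omega)$ and Proposition \ref{otonserra} cannot be invoked. If instead you take $\tilde\Omega\subset\Omega$ (a smoothed $\Omega\cap B_R$), the exterior-side problem disappears but on the $\Omega$-side of that same boundary portion one has $(-\Delta)^s v=\eta g+u\,(-\Delta)^s\eta-I_s(u,\eta)$ with the bilinear term $I_s(u,\eta)(x)=C_{n,s}\int\frac{(u(x)-u(y))(\eta(x)-\eta(y))}{|x-y|^{n+2s}}\,dy$, and with only $u\in L^\infty(\Omega\cap B_4)\cap C^{1,1}_{loc}(\Omega)$ at your disposal this term is only $O(\operatorname{dist}(x,\partial\Omega)^{1-2s})$, hence unbounded when $s>1/2$. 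Making it bounded requires H\"older continuity of $u$ of order greater than $2s-1$ up to that \emph{other} portion of the boundary, which is (a translate of) the very statement being proved --- the argument becomes circular precisely for $s\ge 1/2$.

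This is not an incidental technicality: it is the same $1-s$ obstruction that the paper runs into, where the first pass of the harmonic-part analysis only yields the exponent $\alpha_0=\min(s,1-s)$ in \eqref{decay estimate} and an iteration scheme is then needed to climb to the exponent $s$ when $s>1/2$. Your scheme, as written, can plausibly be completed for $s<1/2$ (there the crude bound $|u(x)-u(y)|\le 2\norm{u}_{L^\infty(\Omega\cap B_4)}$ makes $I_s(u,\eta)$ bounded, and one must still carry out the construction of $\tilde\Omega$ and the verification of the hypotheses of Proposition \ref{otonserra} carefully), but for $s\ge 1/2$ a new ingredient is missing --- either a preliminary boundary decay/H\"older estimate near $\partial\Omega\cap(B_3\setminus B_2)$ of order better than $2s-1$, or an iteration in the spirit of the paper, e.g.\ first proving a $C^{\min(s,1-s)}$ bound and feeding it back into the commutator term. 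Without such a step, the reduction to the global Ros-Oton--Serra result does not go through.
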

The idea of the proof is that we divide a given solution into two parts: the potential part and the harmonic part. The regularity for the potential part is obtained by Proposition \ref{otonserra}. For the harmonic part $h$, we rewrite it in terms of the Poisson representation formula in balls. Using this explicit expression, we  first carry out a detailed analysis to   derive an $\alpha$ power order decay near $\partial\Omega$,
\begin{equation*}
        |h(x)|\le C\{\norm{u}_{L^\infty(\Omega \cap B_4)}+\norm{u}_{\cL_{2s}}\}\operatorname{dist}(x,\partial \Omega)^{\alpha },
    \end{equation*}
    where $\alpha = \min\{s,1-s\}$. Combining this decay estimate and the interior regularity result (Theorem \ref{Holder thm}), we derive the $C^\alpha$ boundary regularity. Then by an iteration process, we increase the power $\alpha$ successively until it reaches the desired power $s$.

As an important application, we establish a priori estimate for the solutions  to \eqref{main2}.  We assume that $\Omega \subset \R ^n$ is an unbounded domain with uniformly $C^{1,1}$ boundary, and $f$ satisfies the following condition:
\begin{itemize}
    \item $f(x,t):\Omega \times [0,\infty)\to\R$ is uniformly H\"older continuous with respect to $x$ and continuous with respect to $t$.
\end{itemize}
\begin{theorem}\label{1A1}
Assume $1<p<\frac{n+2s}{n-2s}$, $f$ satisfies 
\begin{equation}\label{growth f}
    f(x,t)\le C_0(1+t^p)
\end{equation}
uniformly for all $x$ in $\Omega$, and 
\begin{equation}\label{lim f}
    \lim _{t\to\infty}\frac{f(x,t)}{t^p} = K(x),
\end{equation}
    where $K(x)\in (0,\infty)$ is uniformly continuous and $K(\infty):=\lim _{|x|\to\infty }K(x) \in (0,\infty)$. 
Then there exists a constant $C$, such that 
\begin{equation}\label{bd1}
u(x)\le C,\quad\forall \, x\in \Omega
\end{equation}
holds for all nonnegative solutions $u$ of \eqref{main2}.
\end{theorem}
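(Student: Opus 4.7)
The plan is to argue by contradiction via a blow-up and rescaling scheme, where the newly established local boundary regularity of Theorem \ref{1bdry C^s} supplies the compactness that is missing in the earlier literature when the blow-up sequence concentrates on $\partial\Omega$. Suppose no such uniform bound $C$ existed. Then one would have a sequence of nonnegative solutions $u_k$ of \eqref{main2} and points $x_k\in\Omega$ with $M_k:=u_k(x_k)\to\infty$. Applying a doubling-type lemma in the spirit of Pol\'acik--Quittner--Souplet to the auxiliary function $u_k^{(p-1)/(2s)}$, we may replace $x_k$ by a better point at which the rescaled function $v_k(y):=\lambda_k^{2s/(p-1)}u_k(x_k+\lambda_k y)$, with $\lambda_k:=M_k^{-(p-1)/(2s)}\to 0$, satisfies $v_k(0)\ge 1/2$ and is uniformly bounded on balls $B_{R_k}(0)$ for some $R_k\to\infty$. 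The scaling turns \eqref{main2} into $(-\Delta)^s v_k(y)=\lambda_k^{2sp/(p-1)} f\bigl(x_k+\lambda_k y,\,\lambda_k^{-2s/(p-1)}v_k(y)\bigr)$ on the rescaled domain $\Omega_k:=\lambda_k^{-1}(\Omega-x_k)$, and the hypotheses \eqref{growth f} and \eqref{lim f} force this right-hand side, on sets where $v_k$ is bounded, to converge to $K_\infty v_k^p$ for some $K_\infty\in(0,\infty)$.

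Next we would split into two cases according to the rescaled boundary distance $d_k:=\mathrm{dist}(x_k,\partial\Omega)/\lambda_k$. In the interior case $d_k\to\infty$, the domains $\Omega_k$ exhaust $\R^n$; the interior H\"older estimate recalled in the preceding excerpt yields uniform local $C^s$ bounds on $v_k$, and a diagonal Arzel\`a--Ascoli argument produces a nonnegative limit $v_\infty\in C(\R^n)$ with $v_\infty(0)\ge 1/2$ solving $(-\Delta)^s v_\infty=K(\infty)v_\infty^p$ on $\R^n$. In the boundary case where $d_k$ stays bounded, we pick the nearest point $z_k\in\partial\Omega$ to $x_k$ and, after translation and rotation, arrange $z_k=0$ with $x_k$ remaining in a fixed compact set. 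The uniform $C^{1,1}$ regularity of $\partial\Omega$ then forces $\Omega_k$ to converge, along a subsequence, to a half-space $\mathbb{H}$, and Theorem \ref{1bdry C^s} provides uniform $C^s$ estimates of $v_k$ up to $\partial\Omega_k$ depending only on local $L^\infty$ quantities that are controlled by the doubling selection. Passing to the limit gives a nonnegative, nontrivial solution $v_\infty$ of $(-\Delta)^s v_\infty=K_\infty v_\infty^p$ in $\mathbb{H}$ with $v_\infty\equiv 0$ on $\mathbb{H}^c$. Both limits are excluded by the known subcritical Liouville theorems for the fractional Lane--Emden equation on $\R^n$ and on the half-space, which apply precisely in the range $1<p<(n+2s)/(n-2s)$, yielding the desired contradiction.

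The main technical obstacle is the passage to the limit in the nonlocal operator: since $(-\Delta)^s v_k(y)$ depends on $v_k$ globally, the local uniform convergence provided by the H\"older estimates is not by itself enough to pass to the limit in the equation. The doubling procedure only controls $v_k$ on the growing balls $B_{R_k}(0)$, so one must verify separately that the tails $\int_{|y|>R}|v_k(y)|/(1+|y|^{n+2s})\,dy$ can be made uniformly small as $R\to\infty$; the fast $|y|^{-n-2s}$ decay of the kernel combined with the at-worst polynomial pointwise bounds coming from the doubling selection makes this tractable. The equicontinuity up to $\partial\Omega_k$ supplied by Theorem \ref{1bdry C^s}, which is unavailable in the existing boundary estimates of \cite{BDGQ2018,Barrios2019note,DSV17} because of their reliance on a global $L^\infty$ bound, is precisely what finally allows the limit $v_\infty$ to be identified as a classical solution of the Lane--Emden equation on its limit domain.
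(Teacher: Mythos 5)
Your overall scheme coincides with the paper's: contradiction, a doubling-type selection of blow-up centers, rescaling by $\lambda_k=u_k(a^k)^{-\frac{p-1}{2s}}$, a dichotomy according to the rescaled boundary distance, uniform interior estimates plus the new boundary $C^s$ estimate of Theorem \ref{1bdry C^s} for compactness, and the subcritical Liouville theorems on $\R^n$ and on the half-space to conclude. The genuine gap is in your treatment of the limit passage for the nonlocal operator. You propose to justify $\lim_k\flap v_k=\flap v_\infty$ by showing that the tails $\int_{|y|>R}v_k(y)(1+|y|)^{-n-2s}\,dy$ are uniformly small, claiming this follows from ``at-worst polynomial pointwise bounds coming from the doubling selection.'' But the doubling selection controls $v_k$ only on the balls $B_{R_k}(0)$ (in rescaled coordinates); outside these balls there is no pointwise bound at all on $u_k$, since uniform boundedness of the solutions is precisely what is in question. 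Even the uniform bound on $\norm{v_k}_{\cL_{2s}}$ that one can extract from the weak Harnack inequality does not force the tails to vanish uniformly as $R\to\infty$: mass may concentrate at infinity, and in the limit the equation acquires a constant defect. This is exactly the phenomenon of \cite{Du2023blowup} that the paper invokes: one only gets $\lim_k\flap v_k=\flap v -b$ for some constant $b\ge 0$, and in the half-space case the convergence is merely in $C^{2s+\beta}_{loc}$ of the limit domain, so the paper must first generalize the convergence theorem to domains with boundary (Theorem \ref{conv}, proved in the Appendix) and then rule out $b>0$ a posteriori, by comparing $v$ with the Green potential of $K(a)v^p+b$ (in $\R^n$, respectively in $\R^n_+$) via the maximum principle and noting that the potential diverges if $b>0$. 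Without either a genuine uniform tail estimate (which your ingredients do not provide) or the $b$-defect argument, the limit equation $\flap v_\infty=K_\infty v_\infty^p$ is not justified, and the Liouville step has nothing to apply to.

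A secondary, fixable point: in your ``boundary case'' you assert that the limit is nontrivial, but if $d_k/\lambda_k\to 0$ the blow-up center collapses onto $\partial\Omega_k$, where the limit vanishes, so nontriviality is not automatic. The paper isolates this as a separate case and excludes it directly from Theorem \ref{1bdry C^s}: the uniform $C^s$ estimate up to the (flattening) boundary together with $v_k\equiv 0$ outside $\Omega_k$ gives $v_k(\tilde a^k)\le C(d_k/\lambda_k)^s\to 0$, contradicting $v_k(\tilde a^k)=1$. Your uniform boundary estimate does yield this, but you should make the step explicit, since otherwise the claim of a nontrivial half-space limit is unsupported precisely in the regime where the new boundary regularity is most needed.
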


\begin{remark}
    The assumptions on $f$ are essential for the a priori estimate. When $f=0$ and $\Omega = B_1$, Abatangelo, Jarohs and Salda\~na \cite{Abatangelo2018Green} constructed nontrivial nonnegative solutions to \eqref{main2} that are unbounded, see also \cite{Li2023unique}.
\end{remark}

During the process of blow-up and rescaling, it can be shown that a sequence of the rescaled functions (still denoted by $u_i$) converges to a limiting function $u$ in the sense of $C^{2s+\beta}_{loc}$. In the case of a local operator, say $-\Delta$, we already have 
\[   \lim _{i\to\infty}(-\Delta) u_i(x) = (-\Delta) u(x).\]
However, this is not the case for nonlocal operators, as shown in \cite[Theorem 1.1]{Du2023blowup} by Du-Jin-Xiong-Yang, in which they work on the whole space $\R^n$. In order to establish a priori estimate on general  unbounded domains with boundaries, we generalize  their convergence result as follows.
\begin{theorem}\label{conv}
    Assume $n\ge 1$, $s\in (0,1)$, $\beta\in (0,1)$. $\Omega$ is an unbounded domain with boundary. Suppose nonnegative functions $\{u_i\}\subset \mathcal{L}_{2s}\cap C^{2s+\beta}_{loc}(\Omega)$ vanishes outside $\Omega$, and $u\in \mathcal{L}_{2s}$ vanishes outside $\Omega$. If $\{u_i\}$
    converges in $C^{2s+\beta}_{loc}(\Omega)$ to a function $u\in \mathcal{L}_{2s}$ , and $\{\flap u_i \}$ converges pointwisely in $\Omega$, then there exists a constant $b\ge 0 $ such that
    \begin{equation}
        \lim _{i\to\infty}\flap u_i(x) = \flap u(x) -b.
    \end{equation}
\end{theorem}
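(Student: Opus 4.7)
The plan is to decompose $\flap u_i(x)$ into a near-singular piece on a small ball inside $\Omega$ and a tail piece, and to argue that all possible defect in the limit gets absorbed into the tail. Fix $x\in\Omega$ and choose $r\in(0,\mathrm{dist}(x,\partial\Omega))$, and write
\[
\flap u_i(x) = C_{n,s}\,\mathrm{P.V.}\int_{B_r(x)}\frac{u_i(x)-u_i(y)}{|x-y|^{n+2s}}\,dy + C_{n,s}\,A_r\,u_i(x) - C_{n,s}\,T_i(x),
\]
with $A_r:=\int_{|z|>r}|z|^{-n-2s}\,dz$ and $T_i(x):=\int_{|y-x|>r}u_i(y)|x-y|^{-n-2s}\,dy$. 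The first term converges to its $u$-analogue by the hypothesis $u_i\to u$ in $C^{2s+\beta}(B_r(x))$, and the middle by the pointwise convergence of $u_i(x)$; since $\flap u_i(x)$ converges by assumption, $T_i(x)$ must also converge.

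Because $u_i\ge 0$ and $u_i\to u$ pointwise a.e.\ on $\mathbb{R}^n$ (using $C^{2s+\beta}_{loc}$ convergence inside $\Omega$, both functions vanishing outside $\Omega$, and $|\partial\Omega|=0$), Fatou's lemma gives
\[
\lim_i T_i(x)\ge T(x):=\int_{|y-x|>r}u(y)|x-y|^{-n-2s}\,dy.
\]
Setting $b(x):=C_{n,s}(\lim_i T_i(x)-T(x))\ge 0$ and reassembling yields $\lim_i\flap u_i(x)=\flap u(x)-b(x)$. Independence of $b(x)$ from $r$ follows from uniform convergence on any intermediate annulus compactly contained in $\Omega$. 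Moreover, the boundedness of $T_i(x_0)$ at a single interior point $x_0$ forces the uniform estimate $\sup_i\|u_i\|_{\mathcal{L}_{2s}}<\infty$, since $1/(1+|y|^{n+2s})$ is comparable to $1/|x_0-y|^{n+2s}$ for $|y|$ large. This uniform tail bound is the main tool for the final comparison step.

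The main obstacle is showing $b(x)$ is independent of $x\in\Omega$. For two points $x,x'\in\Omega$, I would write
\[
b(x)-b(x') = C_{n,s}\lim_i\int_{\mathbb{R}^n}(u_i(y)-u(y))\,H_{x,x',r}(y)\,dy,
\]
where $H_{x,x',r}$ is the bounded signed kernel supported off $B_r(x)\cup B_r(x')$ obtained from the two difference expressions. Split the $y$-integral into $\{|y|<R\}$ and $\{|y|\ge R\}$. For the tail, the pointwise bound $\bigl||x-y|^{-n-2s}-|x'-y|^{-n-2s}\bigr|\le C|x-x'||y|^{-n-2s-1}$ for $|y|$ large, combined with the uniform $\mathcal{L}_{2s}$ bound, produces an $O(|x-x'|/R)$ contribution that vanishes as $R\to\infty$ uniformly in $i$. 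The bounded-region piece requires $u_i\to u$ in $L^1$ on bounded subsets of $\mathbb{R}^n$; this is immediate on compact subsets of $\Omega$ from $C^{2s+\beta}_{loc}$ convergence, and both functions vanish identically on $\Omega^c$, so the delicate piece is to rule out concentration of $u_i$ along $\partial\Omega$. This is the hard step: the argument must exploit the pointwise convergence of $\flap u_i$ at points whose distance to $\partial\Omega$ can be varied, together with nonnegativity, to preclude any such boundary concentration (else the defect $b(x)$ would pick up a nontrivial $x$-dependent convolution against a boundary measure). Once the bounded-region term is shown to vanish in the limit, we obtain $b(x)\equiv b$ for some constant $b\ge 0$, completing the proof.
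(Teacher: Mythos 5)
Your first half is sound: the decomposition of $\flap u_i(x)$ at a small ball $B_r(x)\Subset\Omega$, the convergence of the singular part from $C^{2s+\beta}$ convergence on $\overline{B_r(x)}$, the deduction that $T_i(x)$ converges, Fatou's lemma giving $b(x)\ge 0$, and the observation that convergence of $T_i$ at one interior point forces $\sup_i\norm{u_i}_{\cL_{2s}}<\infty$ are all correct. But the content of the theorem is precisely that the defect is \emph{independent of} $x$, and that is the step you do not prove: you reduce it to showing that $\int_{B_R}(u_i-u)\,H_{x,x',r}\,dy\to 0$, i.e.\ to excluding concentration of $u_i$ along $\partial\Omega$ in bounded regions, and then only assert that the hypotheses ``must'' preclude this. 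This is not a routine omission, because from the hypotheses as you use them it cannot be done: take $\Omega=\R^n_+$, $u\equiv 0$, and $u_i$ a smooth unit-mass bump supported in $B_{1/i}\bigl(\tfrac{2}{i}e_n\bigr)$. Then $u_i\ge 0$, $u_i\in\cL_{2s}\cap C^\infty$, $u_i\to 0$ in $C^{2s+\beta}_{loc}(\Omega)$ (any compact subset of the open half-space eventually misses the support), and for every $x\in\Omega$ one has $\flap u_i(x)\to -C_{n,s}|x|^{-n-2s}$: a pointwise convergent but genuinely $x$-dependent defect. So nonnegativity plus pointwise convergence of $\flap u_i$ at interior points does not rule out a boundary concentration measure; some additional input is required (in the paper's application it is the uniform bound of the rescaled solutions on the relevant balls), and your ``hard step'' is exactly where the proof must use it.

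For comparison, the paper's proof uses a different decomposition: for fixed $x$ it splits at a \emph{large} radius $R\gg |x|+1$, writing $\flap u(x)-\flap u_i(x)=A_i(x,R)+E_i(x,R)+F_i(x,R)$, and takes iterated limits $i\to\infty$ then $R\to\infty$. There the defect is carried entirely by the far tail $F_i(x,R)=c_{n,s}\int_{B_R^c}u_i(y)|x-y|^{-n-2s}dy$, whose iterated limit is $x$-independent by construction, since for $|y|\ge R\gg|x|$ the kernels $|x-y|^{-n-2s}$ and $|y|^{-n-2s}$ are comparable with ratio tending to $1$ as $R\to\infty$, and nonnegativity gives $b\ge 0$; the bounded part $A_i(x,R)$ is disposed of by the $C^{2s+\beta}$ estimate on $B_\varepsilon(x)$ together with dominated convergence on $B_R\setminus B_\varepsilon(x)$. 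Note that this dominated-convergence step is exactly where a uniform integrable dominant for $u_i$ on $B_R$ (i.e.\ the no-concentration information your proposal lacks) is needed; in the setting of Section 3 it is supplied by the bound $v_k\le 2^{2s/(p-1)}$ on the balls in question. So your diagnosis of where the difficulty lies is accurate, but as written the proposal does not establish the theorem.
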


 In Section 2, we establish local boundary H\"older regularity  for \eqref{main1} and prove Theorem \ref{1bdry C^s}. In Section 3, we derive  a priori estimates for \eqref{main2} and validate Theorem \ref{1A1}. Finally, the proof of Theorem \ref{conv} is included in the Appendix.

\section{Local Boundary H\"older Regularity}
\,\,\,\,\,\, \,\,In this section, we prove Theorem \ref{1bdry C^s} and thus establish a local version of the boundary H\"older regularity for  fractional equations. 

To begin with, let us recall a classical result in
\cite{Rosoton2014Dirichlet}, in which the authors establish an  optimal boundary regularity for the fractional equation with zero exterior Dirichlet condition.
\begin{proposition}[\cite{Rosoton2014Dirichlet}]\label{otonserra}
    Suppose $\Omega$ is a bounded Lipschitz domain with exterior ball condition, $g\in L^\infty(\Omega)$, $s\in (0,1)$, $u$ is a classical solution of the following Dirichlet problem:
    \begin{equation}\label{eq1}
        \begin{cases}
            \flap u = g,&\text{in } \Omega,\\
            u=0, &\text{in } \Omega ^c.\\
        \end{cases}
    \end{equation}
    Then $u\in C^{s}(\R^n)$ and 
\begin{equation}\label{hd}
    \norm{u}_{C^s(\R^n)}\le C\norm{g}_{L^\infty(\Omega)},
\end{equation}
where $C=C(n,s)$ is a positive constant.
\end{proposition}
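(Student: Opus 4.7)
The plan is to derive the optimal $C^s(\R^n)$ bound by combining a global $L^\infty$ control, a sharp one-sided boundary barrier yielding the decay $|u(x)|\le C\|g\|_{L^\infty(\Omega)}\,\mathrm{dist}(x,\partial\Omega)^s$, and interior H\"older regularity. Since $\Omega$ is bounded, say $\Omega\subset B_R$, the first ingredient comes from the standard supersolution based on the explicit function $(R^2-|x|^2)_+^s$, whose fractional Laplacian equals a positive dimensional constant in $B_R$; comparing $\pm u$ against a multiple of it (using $u=0$ outside $\Omega$) yields $\|u\|_{L^\infty(\R^n)}\le C\|g\|_{L^\infty(\Omega)}$.

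The decisive step is the boundary decay. Using the exterior ball condition, for each $x_0\in\partial\Omega$ I would fix an exterior ball $B_r(y_0)\subset\Omega^c$ tangent at $x_0$ and work with a barrier of the form $\psi(x)=(|x-y_0|^2-r^2)_+^s$, the natural radial analogue of the half-space $s$-harmonic function $(x_n)_+^s$. A direct computation (reducing to one variable via the radial structure and comparing with the half-space identity $\flap((x_n)_+^s)=0$) shows $\flap\psi\ge c_0>0$ in a uniform $\Omega$-side neighborhood of $x_0$, while $\psi(x)\le C\,d(x)^s$ there. The comparison principle applied to $\pm u$ against $(\|g\|_{L^\infty(\Omega)}/c_0)\,\psi$, both vanishing on $B_r(y_0)\subset\Omega^c$, then yields
\[
|u(x)|\le C\|g\|_{L^\infty(\Omega)}\,d(x)^s\quad\text{for all }x\in\Omega.
\]

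To promote this pointwise decay to a $C^s$ seminorm estimate, I would combine it with interior regularity. For $x\in\Omega$ with $\rho:=d(x,\partial\Omega)$, the ball $B_{\rho/2}(x)$ is fully interior; rescaling to unit scale and applying interior H\"older estimates for $\flap$ gives
\[
\rho^s\,[u]_{C^s(B_{\rho/4}(x))}\le C\bigl(\|u\|_{L^\infty(B_{\rho/2}(x))}+\rho^{2s}\|g\|_{L^\infty(\Omega)}\bigr)\le C\|g\|_{L^\infty(\Omega)}\,\rho^s,
\]
where the second inequality uses the $d^s$ boundary decay on $B_{\rho/2}(x)$. Hence $[u]_{C^s(B_{\rho/4}(x))}\le C\|g\|_{L^\infty(\Omega)}$ uniformly in $x$. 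A standard case split on pairs $x,y\in\R^n$ (both interior at scale larger than $|x-y|$ versus at least one within $|x-y|$ of $\partial\Omega$) then patches these scale-invariant interior seminorms with the boundary decay to produce the global estimate $[u]_{C^s(\R^n)}\le C\|g\|_{L^\infty(\Omega)}$.

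The main obstacle will be the strict positivity $\flap\psi\ge c_0>0$ in the barrier step: the half-space prototype $(x_n)_+^s$ is $s$-harmonic, so the positivity must be extracted from the curvature of $\partial B_r(y_0)$. One must therefore carry out the singular integral defining $\flap\psi$ with enough precision to obtain a quantitative lower bound depending only on $r$, and then verify that this bound descends to $\Omega$ through a uniform lower bound on the exterior-ball radii provided by the exterior ball condition.
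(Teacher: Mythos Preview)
This proposition is not proved in the paper; it is simply quoted from \cite{Rosoton2014Dirichlet}. The only portion the paper spells out is the global bound $\norm{u}_{L^\infty}\le C\norm{g}_{L^\infty(\Omega)}$, obtained by comparison with $\bar u(x)=\norm{g}_{L^\infty(\Omega)}\tfrac{R^{2s}}{a}(1-|x/R|^2)_+^s$ on a large ball $B_R\supset\Omega$; this matches your first ingredient exactly. Your overall scheme (exterior-ball barrier $\Rightarrow |u|\le C\norm{g}_{L^\infty}d^s$, then rescaled interior estimates and a near/far dichotomy to glue into a global $C^s$ seminorm) is precisely the route of the cited reference, so beyond that first step there is nothing further in the present paper to compare against.

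There is, however, a real issue in your barrier step. The candidate $\psi(x)=(|x-y_0|^2-r^2)_+^s$ does \emph{not} satisfy $\flap\psi\ge c_0>0$ outside $B_r(y_0)$; the curvature drives the sign the other way. Compared with the $s$-harmonic half-space profile $(x_n)_+^s$, your $\psi$ has a strictly smaller zero set (a ball rather than a half-space) and is positive in every direction at infinity; both effects make $\int\frac{\psi(x)-\psi(y)}{|x-y|^{n+2s}}\,dy$ smaller than in the flat case, so $\flap\psi<0$ in $\{|x-y_0|>r\}$ and $\psi$ is a \emph{sub}solution there. (Your quadratic variant also grows like $|x|^{2s}$, which is borderline for $\mathcal{L}_{2s}$.) In \cite{Rosoton2014Dirichlet} the supersolution is instead obtained by truncating a profile of this type to a bounded function outside a larger concentric ball: it is this cutoff at infinity, not boundary curvature, that produces the strict inequality $\flap\ge c_0$ in the intermediate annulus. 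With that correction the remainder of your outline goes through.
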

Notice that here $\Omega$ is a {\em bounded domian}. In this situation,  a supersolution  and a subsolution can be constructed  to show that the global $L^{\infty}$ norm of $u$ is bounded by $\norm{g}_{L^\infty(\Omega)}$ as the following. Let 
\[\psi(x):=(1-|x|^2)_+^s.\]
It is well known that 
\[\flap \psi (x)=a>0.\]
Choose $R$ sufficiently large, so that $\Omega\subset B_R(0).$ Denote
\[\bar{u}(x):=\norm{g}_{L^{\infty}(\Omega)}\frac{R^{2s}}{a}\psi_R(x), ~\mbox{with}~ \psi_R(x):={\psi(\frac{x}{R})}.\]
Then it can be easily verified that $\bar{u}$ and $-\bar{u}$ are super-solution and sub-solution of \eqref{eq1}, respectively. Therefore, 
\begin{equation*}
    \norm{u}_{L^{\infty}(\Omega)}\le C\norm{g}_{L^\infty(\Omega)},
\end{equation*}
From the above construction, one can see that in order to obtain \eqref{hd}, it is necessary that $\Omega$ be bounded.
When $\Omega$ is an {\em unbounded domain},  it is evident that the solution $u$ is required to be globally bounded. This requirement can not be fulfilled  in the process of  employing  the blow-up and rescaling argument to obtain a priori estimates for solutions to a corresponding family of nonlinear fractional equations on {\em unbounded domains with boundaries}.

This motivates  us to  establish a local version of the boundary regularity, in which, instead of the global one, only a local ${L^{\infty}}$ norm of the solution is involved.
\begin{theorem}\label{bdry C^s}
    Suppose $\Omega$ is a unbounded domain  with locally $C^{1,1}$ boundary, $0<s<1$, $g\in L^\infty(\Omega\cap B_4)$ and $u$ is a nonnegative classical solution of
    \begin{equation}
        \begin{cases}
            \flap u = g,&\text{in } \Omega\cap B_4,\\
            u=0, &\text{in } B_4\backslash \Omega.\\
        \end{cases}
    \end{equation}
    Then $u\in C^{s}(\Omega\cap B_{1/2})$ and 
    \[
    \norm{u}_{C^s(\Omega\cap B_{1/2})}\le C(\norm{g} _{L^\infty(\Omega\cap B_4)} + \norm{u} _{L^\infty(\Omega\cap B_4)} ),
    \]
    where $C$ depends on $n,s$ and $C^{1,1}_{loc}$ norm of $\partial\Omega$, and $B_r$ denotes a ball centered at any fixed point on the boundary $\partial\Omega$ with radius $r$.
\end{theorem}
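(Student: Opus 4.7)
My plan is to follow the decomposition sketched in the introduction: near a fixed boundary point of $\partial\Omega$, write $u=v+h$, where $v$ absorbs the source term $g$ and $h$ is $s$-harmonic on a smaller neighborhood, and then control each piece separately.

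For the potential part, let $\tilde\Omega$ be a bounded $C^{1,1}$ domain with $\Omega\cap B_2\subset\tilde\Omega\subset\Omega\cap B_3$, obtained by smoothing $\partial(\Omega\cap B_3)$ away from $\partial\Omega$, and let $v$ be the classical solution of
\begin{equation*}
\flap v = g\chi_{\Omega\cap B_3}\text{ in }\tilde\Omega,\qquad v=0\text{ in }\tilde\Omega^c.
\end{equation*}
Since $\tilde\Omega$ is bounded and satisfies the exterior ball condition, Proposition \ref{otonserra} gives $\norm{v}_{C^s(\R^n)}\le C\norm{g}_{L^\infty(\Omega\cap B_4)}$. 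Setting $h:=u-v$, one checks that $\flap h=0$ in $\Omega\cap B_2$ and $h=0$ in $B_2\setminus\Omega$, while both $\norm{h}_{L^\infty(B_2)}$ and $\norm{h}_{\cL_{2s}}$ are controlled by the right-hand side of the target estimate.

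The heart of the proof is the boundary control of $h$. For $x\in\Omega\cap B_1$ close to $\partial\Omega$, pick an interior anchor $x_0\in\Omega$ with $B_r(x_0)\subset\Omega\cap B_2$ and $r\asymp\operatorname{dist}(x,\partial\Omega)$, chosen using the locally $C^{1,1}$ boundary so that, after flattening $\partial\Omega$ near $x_0$, the set $B_{2r}(x_0)\setminus\Omega$ has Lebesgue measure $\gtrsim r^n$. Apply the Poisson representation for $\flap$ on $B_r(x_0)$ and split the integral into the tail over $\R^n\setminus B_3$, bounded via $\norm{u}_{\cL_{2s}}$ by a covering argument and then absorbed into $\norm{u}_{L^\infty(\Omega\cap B_4)}$, and the local part over $B_3\setminus B_r(x_0)$. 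On this local part $h$ vanishes on the portion $B_3\setminus\Omega$, so a direct estimate of the fractional Poisson kernel, whose mass over the missing region $\Omega^c\cap B_3$ scales like $\operatorname{dist}(x,\partial B_r(x_0))^{\min\{s,1-s\}}$, yields the decay
\begin{equation*}
|h(x)|\le C\bigl(\norm{u}_{L^\infty(\Omega\cap B_4)}+\norm{u}_{\cL_{2s}}\bigr)\operatorname{dist}(x,\partial\Omega)^{\alpha},\qquad \alpha=\min\{s,1-s\}.
\end{equation*}
Combining this decay with interior H\"older estimates for $s$-harmonic functions, applied on balls of radius $\operatorname{dist}(\cdot,\partial\Omega)/2$ via the standard scaling, upgrades the decay to a $C^\alpha$ estimate for $h$ up to $\partial\Omega$; together with $v\in C^s(\R^n)$ this gives $u\in C^\alpha(\Omega\cap B_{1/2})$.

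When $s\le 1/2$ we already have $\alpha=s$ and the proof is complete. When $s>1/2$ we have $\alpha=1-s<s$, and I would iterate: the improved pointwise bound $u(x)\le C\operatorname{dist}(x,\partial\Omega)^{\alpha_k}$ feeds back into the Poisson estimate for $h$, producing a decay of order $\operatorname{dist}^{\alpha_{k+1}}$ with $\alpha_{k+1}=\min\{s,\alpha_k+\alpha\}$, so a finite number of steps brings $\alpha_k$ up to $s$. The main obstacle I anticipate is the careful accounting in the Poisson step: one has to balance the decay of the kernel, which alone gives a factor $\operatorname{dist}^s$, against the singular behavior from integrating only over $\Omega^c\cap B_r(x_0)$ rather than the full complementary ball, and simultaneously absorb the $\cL_{2s}$ tail into the local $L^\infty$ norm. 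It is precisely this balance that forces the threshold exponent $\alpha=\min\{s,1-s\}$ in the first round and makes the iteration unavoidable for $s>1/2$.
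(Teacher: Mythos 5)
Your decomposition (potential part solved on a bounded auxiliary domain and controlled by Proposition \ref{otonserra}, remainder $h$ that is $s$-harmonic in $\Omega\cap B_2$ and vanishes on $B_2\setminus\Omega$) is sound and close in spirit to the paper, which instead solves the potential part in a unit ball tangent to $\partial\Omega$ from inside. The first genuine gap is in your decay estimate for $h$. Your anchor ball $B_r(x_0)$ has radius $r\asymp d:=\operatorname{dist}(x,\partial\Omega)$, and you claim a ``direct estimate'' gives $|h(x)|\lesssim d^{\min\{s,1-s\}}$ because the Poisson-kernel mass over the missing region $\Omega^c\cap B_3$ scales like $\operatorname{dist}(x,\partial B_r(x_0))^{\min\{s,1-s\}}$. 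That is not the relevant quantity, and the scaling is not correct: since $h$ vanishes on $\Omega^c\cap B_3$, what bounds $|h(x)|$ is the kernel mass over the set where $h\neq 0$, namely $\Omega\setminus B_r(x_0)$ plus the far tail. At scale $r\asymp d$ the boundary is essentially flat, this set reaches $\partial B_r(x_0)$ and carries a kernel mass bounded below by a universal constant, so the direct estimate only yields $|h(x)|\le(1-\theta)\sup|h|+\text{tail}$. That is a contraction; iterating it over dyadic scales gives $C^\alpha$ for some small $\alpha$ (a Dipierro--Soave--Valdinoci type exponent), not $\min\{s,1-s\}$. The sharp exponent in the paper comes from a different geometry: a tangent interior ball of \emph{fixed} radius $1$ at the boundary point, for which the exterior datum of the harmonic part is $u$ itself, vanishing outside $\Omega$, so the only nearby nonzero contribution is the thin crescent between $\partial B_1(a)$ and the $C^{1,1}$ graph with $|\varphi(y')|\le M|y'|^2$; it is the quadratic pinching of this crescent, computed in polar coordinates, that produces $d^s+d^{1-s}$. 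A ball of radius $\asymp d$ cannot see this pinching, and consequently your bootstrap $\alpha_{k+1}=\min\{s,\alpha_k+\alpha\}$ has no justified mechanism either; in the paper the gain of $1-s$ per step comes from reinserting the improved decay of $u$ into the crescent integral for the fixed tangent ball.

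The second gap is the absorption of $\norm{u}_{\cL_{2s}}$. Outside $B_4$ the solution is neither zero nor assumed bounded, so no covering argument can control the tail $\int_{B_3^c}|u(y)|(1+|y|)^{-n-2s}\,dy$ by $\norm{u}_{L^\infty(\Omega\cap B_4)}$. The paper handles this with the weak Harnack inequality for nonnegative solutions (Proposition \ref{Harnack thm}), applied to $u$ in an interior ball, which gives $\norm{u}_{\cL_{2s}}\le C\{\norm{u}_{L^\infty(\Omega\cap B_4)}+\norm{g}_{L^\infty(\Omega\cap B_4)}\}$; this is exactly where the nonnegativity hypothesis enters, and your sketch offers no substitute for it. Both gaps are fixable, but only by changing the mechanism to the fixed tangent-ball Poisson estimate plus weak Harnack, which is the content of the paper's proof.
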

The idea of the proof is that we divide the solution into two parts: the potential part and the harmonic part. The regularity for the potential part is obtained by Theorem \ref{otonserra}. For the harmonic part, we firstly derived an $\alpha$ power order decay near $\partial\Omega$, here $\alpha$ is a certain positive number which may be less than $s$. Combining the decay estimate and the interior regularity result (Theorem \ref{Holder thm}), we derive the $C^\alpha$ boundary regularity. Finally we refine the boundary regularity by an iteration progress. 

In order to analyze the harmonic part of the solution, we rewrite it by using the Poisson representation formula in balls. Using explicit expressions, we can carry out a detailed analysis to derive decay estimate.

\begin{proof}
     Without loss of generality, we may assume that $0\in\partial\Omega$ and the ball $B_r$ is centered at the origin. Since $\partial \Omega$ is locally $C^{1,1}$, for any $z\in \partial\Omega\cap B_2$, we can always find a ball $B\subset \Omega \cap B_4$ tangent to $\partial \Omega$ at $z$ with a universal radius independent of $z$. Without loss of generality, we assume such balls have the same radius 1 for any $z\in \partial\Omega\cap B_2$.
     
    Fix sufficiently small $\varepsilon \in (0,1/2)$. 
    For any $x\in (\Omega \backslash \Omega _\varepsilon) \cap B_1$ with 
    \[
    \Omega _\varepsilon := \{x\in\Omega : \operatorname{dist}(x,\partial\Omega)\ge \varepsilon\}.
    \]
    Since $\partial \Omega$ is locally $C^{1,1}$, there exists a unique $x_0\in \partial \Omega\cap B_2$ such that 
    \[
  d:= \operatorname{dist}(x,\partial \Omega) = |x-x_0|.
    \]
   { Denote $x=x_0 + d\nu$, where $\nu = \frac{x-x_0}{|x-x_0|}$, then $d<\varepsilon$. }

    Suppose $B_1(a)\subset  \Omega \cap B_4$ tangent to 
    $\partial \Omega $ at $x_0$, here $a = x_0+\nu$. For clarity, the following figure \ref{fig:geometry} illustrates the relative positions of $x_0$, $x$, and the tangent ball $B_1(a)$.

\begin{figure}[htbp]
\centering
\begin{tikzpicture}[scale=0.8]
    \filldraw (0,0) circle (1.5pt) node[below] {$o$};
    \draw[black, thick, smooth, domain=-5:5, samples=100] 
        plot (\x, {1/20*\x*\x});
    \draw[black, thick, dashed, smooth, domain=-5:5, samples=100] 
        plot (\x, {1/20*\x*\x+0.4});
    \draw[blue, thick] (0.3,1.02) circle (1);
    \node at ([shift=(45:1.47)] 0.3,1.02) {${\color{blue}B_1(a)}$};
    \filldraw[blue] (0.3,1.02) circle (1.5pt) node[right]{$a$};
     \filldraw[black] (0.35,0.02) circle (1.5pt) node[below right] {$x_0$} ;
    \filldraw[black]  (0.342,0.18) circle (1.5pt) circle (1.5pt) node[right] {$x$} ;
    \draw[-stealth, black, thick] (0.35,0.02) -- (0.3,1.02);
    \begin{scope}
        \clip (-5,0) rectangle (5,6);
        \clip (-5,6) -- (5,6) -- (5,0) -- 
              plot[domain=5:-5, smooth, samples=100] (\x, {1/20*\x*\x}) -- 
              cycle;
        \draw[black, thick] (0,0) circle (1);
        \draw[black, thick] (0,0) circle (4);
    \end{scope}
    \node at ([shift=(130:3.1)] 0,0) {$\Omega\cap B_4$};
    \node at ([shift=(192:1.2)] 0,0) {$\Omega\cap B_1$};
    \node at ([shift=(9:4.8)] 0,0) {$\partial\Omega$};
    \node at ([shift=(165:4.8)] 0,0) {$\varepsilon$};
\end{tikzpicture}
\caption{The geometry near a boundary point $x_0$.}
    \label{fig:geometry}
\end{figure}
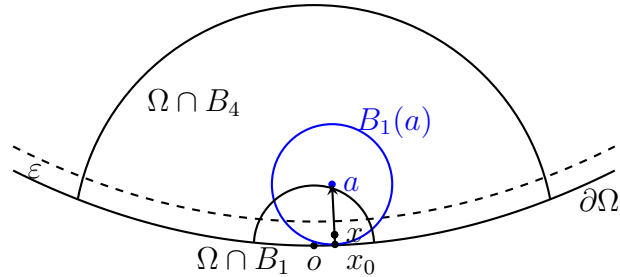
    
    We decompose the solution into two parts: 
    \[
    u =w + h \quad \text{in }B_1(a),
    \]
    with the potential part $w$ satisfying
    \begin{equation}
        \begin{cases}
            \flap w = g,&\text{in } B_1(a),\\
            w=0, &\text{in } B_1(a)^c,\\
        \end{cases}
    \end{equation}
    and the harmonic part $h$ solving
    \begin{equation}
        \begin{cases}
            \flap h = 0,&\text{in } B_1(a),\\
            h=u, &\text{in } B_1(a)^c.\\
        \end{cases}
    \end{equation}
    
    To estimate the potential part, we apply Theorem \ref{otonserra} to arrive at
    \begin{equation}\label{w}
        \norm{w} _{C^s(\R ^n)}\le \norm{g} _{L^\infty(B_1(a))}\le \norm{g} _{L^\infty(\Omega\cap B_4)}.
    \end{equation}
    
   For the harmonic part $h$, we first derive the following decay estimate\begin{equation}\label{decay estimate}
        |h(x)|\le C\{\norm{u}_{L^\infty(\Omega \cap B_4)}+\norm{u}_{\cL_{2s}}\}\operatorname{dist}(x,\partial \Omega)^{\alpha _0},\quad x\in (\Omega \backslash \Omega _\varepsilon) \cap B_1.
    \end{equation}
    where $\alpha _0 = \min(s,1-s)$, and $C$ is a constant depending on $n,s$, and $\norm{\partial \Omega}_{C^{1,1}_{loc}}$.

  In fact, by the Poisson representation, 
   \begin{align*}
    h(x) & = \int _{B_1(a)^c}\frac{(1-|x-a|^2)^s}{(|y-a|^2-1)^s}\frac{u(y)}{|x-y|^n}dy\\
    &\le Cd^s\int _{B_1(a)^c}\frac{u(y)}{(|y-a|^2-1)^s|x-y|^n}dy\\
    & = Cd^s\left(\int _{B_1(a)^c\backslash B_{1/2}(x_0)}+\int _{B_1(a)^c\cap B_{1/2}(x_0)} \right)\frac{u(y)}{(|y-a|^2-1)^s|x-y|^n}dy \\
    & :=J_1+J_2,
\end{align*}
with
\begin{align}
    J_1  & = Cd^s \int _{B_1(a)^c\backslash B_{1/2}(x_0)}\frac{u(y)}{(|y-a|^2-1)^s|x-y|^n}dy\nonumber\\
    &\le Cd^s\left(\int _{B_2(a)^c}\frac{u(y)}{(1+|y-a|)^{n+2s}}dy + \int _{B_2(a)\backslash(B_1(a)\cup B_{1/2}(x_0))}\frac{u(y)}{(|y-a|-1)^{2s}}dy\right)\nonumber \\
    & \le Cd^s\left(\norm{u}_{\cL_{2s}}+\norm{u}_{L^\infty(\Omega\cap B_4)}\right).\label{J1}
\end{align}

Now we analyze $J_2$. After a rigid transformation, we may assume $x_0=0$, $\nu = e_n$, $a = (0,1)$ and $x = (0,d)$.
Let $y_n=\varphi (y')$ denotes the graph of $\partial \Omega $ near $0$:
\[
\partial \Omega \cap B_4 : = \{(y', \varphi(y')): |y'|< 4\}.
\] 
Since $\partial \Omega\cap B_4 $ is $C^{1,1}$, there exists a constant $M$ depending on $\partial \Omega$, such that  $$|\varphi (y')|< M |y'|^2 $$ for any $|y'|<4$.
We calculate it  by using the sphere polar coordinates to first $n-1$ variables: 

\begin{align*}
    J_2 & = Cd^s \int _{B_1(a)^c\cap B_{1/2}\cap\Omega}\frac{u(y)}{(|y-a|^2-1)^s|x-y|^n}dy\\
    & = Cd^s \int _{|y'|<\frac{1}{2}}\int _{\varphi (y')}^{1-\sqrt{1-|y'|^2}}\frac{u(y)}{(|y'|^2+y_n(y_n-2))^s}\frac{1}{(|y'|^2+(d-y_n)^2)^{\frac{n}{2}}}dy_n\,dy'\\
    & \le Cd^s\int _0^{\frac{1}{2}}\int _{\mathbb{S}^{n-2}}\int _{-M r^2}^{1-\sqrt{1-r^2}}\frac{u(y)}{(r^2+y_n(y_n-2))^s}\frac{r^{n-2}}{(r^2+(d-y_n)^2)^{\frac{n}{2}}}dy_n\,d\sigma\,dr
\end{align*}

To continue, we change the variables $y_n =r^2z$:
\begin{align}
    J_2 & \le Cd^s\int _0^{\frac{1}{2}}\int _{\mathbb{S}^{n-2}}\int _{-M r^2}^{1-\sqrt{1-r^2}}\frac{u(y)}{(r^2+y_n(y_n-2))^s}\frac{r^{n-2}}{(r^2+(d-y_n)^2)^{\frac{n}{2}}}dy_n\,d\sigma\,dr\nonumber\\
    &  = Cd^s\int _0^{\frac{1}{2}} r^{-2s}\int _{\mathbb{S}^{n-2}}\int _{-M }^{\frac{1}{1+\sqrt{1-r^2}}}\frac{u(y)}{(1+z(r^2z-2))^s}\frac{1}{(1+(\frac{d}{r}-rz)^2)^{\frac{n}{2}}}dz\,d\sigma\,dr\nonumber\\
   & = Cd^s\left(\int _0^{d}+\int _{d}^{\frac{1}{2}} \right)r^{-2s}\int _{\mathbb{S}^{n-2}}\int _{-M}^{\frac{1}{1+\sqrt{1-r^2}}}\frac{u(y)}{(1+z(r^2z-2))^s}\frac{1}{(1+(\frac{d}{r}-rz)^2)^{\frac{n}{2}}}dz\,d\sigma\,dr\nonumber\\
   &:= J_{21}+J_{22}.\label{J2}
\end{align}

To calculate $J_{21}$, we notice that for $r\in (0,d)$,  ${(1+(\frac{d}{r}-rz)^2)^{-\frac{n}{2}}}$ behaves like $r^nd^{-n}$. 
\begin{align}
    J_{21}&\le Cd^{s-n}\int _0^dr^{n-2s}dr\int _{\mathbb{S}^{n-2}}\int _{-M}^{\frac{1}{1+\sqrt{1-r^2}}}\frac{u(y)}{(1+z(r^2z-2))^s}dz\,d\sigma\nonumber\\
    &\le Cd^{s-n}\norm{u}_{L^\infty(\Omega\cap B_4)}\int _0^dr^{n-2s}dr\int _{-M}^{\frac{1}{1+\sqrt{1-r^2}}}\frac{1}{(1+z(r^2z-2))^s}dz\\
    &\xlongequal{t=1+z(r^2z-2)}Cd^{s-n}\norm{u}_{L^\infty(\Omega\cap B_4)}\int _0^dr^{n-2s}dr\int _{0}^{C(M)}\frac{1}{t^s}\frac{1}{2\sqrt{1-r^2(1-t)}}dt\nonumber\\
    &\le Cd^{s-n}\norm{u}_{L^\infty(\Omega\cap B_4)}\int _0^dr^{n-2s}dr\int _{0}^{C(M)}\frac{1}{t^s}dt\nonumber\\
    &\le C(M)d^{1-s}\norm{u}_{L^\infty(\Omega\cap B_4)}.\label{J21}
\end{align}
Here we have used the fact that $1+z(r^2z-2)$ is decreasing when $z\in (-\infty, \frac{1}{1+\sqrt{1-r^2}})$, and the Jacobian
\[
\frac{dt}{dz} = \frac{1}{2\sqrt{1-r^2(1-t)}} 
\]
is bounded when $d$ is sufficiently small and $t\in (0,C(M))$, where $C(M)$ is a positive constant depending on $M$.

For $J_{22}$, since the term ${(1+(\frac{d}{r}-rz)^2)^{-\frac{n}{2}}}$ is bounded, then similar to the estimate of $J_{21}$, we have
\begin{align}
    J_{22}&\le Cd^s\int _d^{\frac{1}{2}}r^{-2s}dr\int _{\mathbb{S}^{n-2}}\int _{-M}^{\frac{1}{1+\sqrt{1-r^2}}}\frac{u(y)}{(1+z(r^2z-2))^s}dz\,d\sigma\nonumber\\
    &\le Cd^s\norm{u}_{L^\infty(\Omega\cap B_4)}\int _0^{C(M)}\frac{1}{t^s}dt\nonumber\\
    &\le C(M)(d^s+d^{1-s})\norm{u}_{L^\infty(\Omega\cap B_4)}.\label{J22}
\end{align}
Combining \eqref{J1}, \eqref{J2}, \eqref{J21} and \eqref{J22}, we obtain the decay estimate \eqref{decay estimate}.

\medskip

To continue estimate $\norm{u}_{C^{\alpha_0}(\Omega\cap B_{1/2})}$, we need the interior H\"older  estimate from \cite{chen2025refinedregularitynonlocalelliptic}.
\begin{proposition}[Interior H\"older regularity]\label{Holder thm}
    Assume $0<s<1$, and $u\in C^{1,1}_{loc}(\R^n)\cap \mathcal{L}_{2s}$ is a nonnegative solution of 
    \begin{equation}
        \flap u(x) = g(x),\quad x\in B_1
    \end{equation}
    where $g\in L^\infty(B_1)$. Then for any $0<\varepsilon <2s$,
    \begin{equation}
        \norm {u}_{C^{[2s-\varepsilon],\{2s-\varepsilon\}}(B_{1/2})}\le C(\norm{g}_{L^\infty(B_1)}+\norm{u}_{L^\infty(B_1)}).
    \end{equation}
\end{proposition}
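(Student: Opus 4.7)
The plan is to deduce the interior H\"older estimate from a splitting of $u$ into a potential part plus an $\flap$-harmonic part, in the spirit of Silvestre's classical approach. Fix an auxiliary radius $r \in (1/2, 1)$, say $r = 3/4$, and write $u = w + h$ on $\R^n$, where $w$ is the Riesz-type potential solving $\flap w = g \mathbf{1}_{B_r}$ on $\R^n$ with appropriate decay at infinity---explicitly $w(x) = c_{n,s}\int_{B_r}|x-y|^{2s-n}g(y)\,dy$ when $2s < n$---so that $h := u - w$ is $\flap$-harmonic in $B_r$.

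For the potential part, I would apply classical Riesz potential estimates. Differentiating the kernel $|x-y|^{2s-n}$ under the integral sign up to order $k := [2s-\varepsilon]$ produces a convolution of $g$ against a locally integrable kernel of order $|x-y|^{2s-n-k}$; estimating a further finite difference of size $\rho$ by splitting the integration into $|x-y| \le \rho$ and $|x-y| > \rho$ then yields the H\"older exponent $\{2s-\varepsilon\}$, giving
\begin{equation*}
\norm{w}_{C^{[2s-\varepsilon],\{2s-\varepsilon\}}(\R^n)} \le C\norm{g}_{L^\infty(B_r)}.
\end{equation*}
For the harmonic part, the fractional Poisson representation on $B_r$ gives $h(x) = \int_{B_r^c} P_r(x,y)h(y)\,dy$, and since $P_r(x,\cdot)$ is smooth in $x$ on $\overline{B_{1/2}}$ and decays at infinity like $(1+|y|)^{-n-2s}$, differentiation under the integral furnishes $\norm{h}_{C^m(B_{1/2})} \le C_m(\norm{h}_{L^\infty(B_r)} + \norm{h}_{\cL_{2s}})$ for every integer $m$. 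Combining the two bounds and using $\norm{h}_{L^\infty(B_r)} \le \norm{u}_{L^\infty(B_1)} + \norm{w}_{L^\infty(\R^n)} \le \norm{u}_{L^\infty(B_1)} + C\norm{g}_{L^\infty(B_1)}$, together with the analogous estimate on the tail, closes the argument.

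The main obstacle is the borderline case where $2s - \varepsilon$ becomes an integer---in particular $2s = 1$---in which the naive Riesz-potential bound picks up a logarithmic factor; the $\varepsilon$-loss in the exponent is built in precisely to absorb this. A robust way to handle it is through a compactness / blow-up argument in the spirit of Caffarelli and Silvestre: assume the estimate fails along a sequence of scales $\rho_k \downarrow 0$, normalize $u$ appropriately, pass to a limit by Arzel\`a--Ascoli, and obtain an $\flap$-harmonic function on $\R^n$ of subpolynomial growth, which a Liouville-type theorem forces to be a polynomial of low degree---contradicting the failure of approximation. Throughout, one must track how the $\cL_{2s}$ tail participates in the rescalings, which is routine once one notes that the Riesz potential of a compactly supported $L^\infty$ function decays polynomially at infinity.
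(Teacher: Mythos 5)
There is a genuine gap, and it sits exactly in the clause you pass over with ``together with the analogous estimate on the tail.'' The estimate you are asked to prove contains \emph{no} nonlocal term: the right-hand side is $\norm{g}_{L^\infty(B_1)}+\norm{u}_{L^\infty(B_1)}$ only. Your decomposition $u=w+h$ gives, for the $s$-harmonic part, a bound of the form $\norm{h}_{C^m(B_{1/2})}\le C(\norm{h}_{L^\infty(B_r)}+\norm{h}_{\cL_{2s}})$, i.e.\ it involves $\norm{u}_{\cL_{2s}}$, and nothing in your argument converts this global tail into the local quantities appearing in the statement. This cannot be done for general sign-changing $u$: by the Dipierro--Savin--Valdinoci theorem that every smooth function is locally $s$-harmonic up to a small error, there exist functions with $\flap u=0$ in $B_1$, $|u|\le 1$ in $B_1$, and arbitrarily large $C^{\alpha}(B_{1/2})$ seminorm for any fixed $\alpha>0$, the compensating mass being placed far away. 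Hence any proof of Proposition \ref{Holder thm} must use the hypothesis $u\ge 0$, which your proposal never invokes; as written, the argument proves the standard interior estimate with a tail term, not the refined one. The concluding compactness/Liouville sketch does not repair this, since the subpolynomial-growth control needed for the blow-up limit is again global information unavailable from $\norm{u}_{L^\infty(B_1)}$ alone.

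The missing ingredient is the weak Harnack inequality for nonnegative supersolutions (Proposition \ref{Harnack thm} in this paper, taken from \cite{Rosoton2019Boundary}): since $u\ge0$ in $\R^n$ and $\flap u\ge-\norm{g}_{L^\infty(B_1)}$ in $B_1$, one gets $\norm{u}_{\cL_{2s}}\le C\left(\inf_{B_{1/2}}u+\norm{g}_{L^\infty(B_1)}\right)\le C\left(\norm{u}_{L^\infty(B_1)}+\norm{g}_{L^\infty(B_1)}\right)$, after which your potential/harmonic splitting (or the classical interior estimate with tail) does close the argument; this is precisely how the present paper eliminates the tail in the proof of Lemma \ref{global holder lemma}, and the proposition itself is imported from \cite{chen2025refinedregularitynonlocalelliptic} rather than proved here, the whole point of that refinement being the removal of the nonlocal norm via positivity. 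Two smaller issues: the formula $w(x)=c_{n,s}\int_{B_r}|x-y|^{2s-n}g(y)\,dy$ fails when $2s\ge n$ (the case $n=1$, $s\ge1/2$ needs the modified fundamental solution), and your claimed $C^\infty$ bound for $h$ on $B_{1/2}$ is fine but should be stated with the exterior datum $u-w$ in $\cL_{2s}$, which is again the tail you must ultimately control.
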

Based on this, we arrive at
\begin{lemma}\label{global holder lemma}
    The decay estimate \eqref{decay estimate} and Proposition \ref{Holder thm} implies that 
    \begin{equation}
        \norm{u} _{C^{\alpha _0}(\Omega\cap B_{1/2})}\le C(\norm{u}_{L^\infty(\Omega\cap B_4)}+\norm{g}_{L^\infty(\Omega\cap B_4)})
    \end{equation}
    where $C$ depends on $s,n$, and $\norm{\partial \Omega}_{C^{1,1}_{loc}}$, $\alpha_0:=\min\{s,1-s\}\in (0,1/2)$. 
\end{lemma}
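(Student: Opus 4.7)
The plan is to combine the pointwise boundary decay from \eqref{decay estimate} with the interior H\"older estimate of Proposition \ref{Holder thm} via a standard distance-dichotomy scaling argument: near the boundary the solution is quantitatively small of order $d_x^{\alpha_0}$, while on interior balls of size comparable to $d_x:=\operatorname{dist}(x,\partial\Omega)$ the solution is $C^{2s-\varepsilon}$, and these two facts interpolate to give $C^{\alpha_0}$ up to $\partial\Omega$.

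First I would upgrade the pointwise decay from $h$ alone to $u=w+h$. Since $w$ vanishes outside $B_1(a)$ and in particular at the tangent point $x_0\in\partial B_1(a)\cap\partial\Omega$, the global $C^s$ bound \eqref{w} gives $|w(x)|\le C\norm{g}_{L^\infty(\Omega\cap B_4)}d_x^s$; since $\alpha_0\le s$ and $d_x\le 1/2$, this is dominated by $C d_x^{\alpha_0}$. Adding \eqref{decay estimate} yields
\[
|u(x)|\le C\bigl(\norm{u}_{L^\infty(\Omega\cap B_4)}+\norm{g}_{L^\infty(\Omega\cap B_4)}\bigr)\,d_x^{\alpha_0}
\]
for all $x\in(\Omega\backslash\Omega_\varepsilon)\cap B_1$; for $x$ with $d_x$ bounded below, the same inequality is trivial from $\norm{u}_{L^\infty(\Omega\cap B_4)}$.

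Next, for $x,y\in\Omega\cap B_{1/2}$, assume WLOG $d_x\le d_y$ and set $r:=d_x$. If $|x-y|\ge r/2$, then $d_y\le d_x+|x-y|\le 3|x-y|$, so the pointwise decay directly yields $|u(x)-u(y)|\le|u(x)|+|u(y)|\le C|x-y|^{\alpha_0}$. If instead $|x-y|<r/2$, then $B_{r/2}(x)\subset\Omega$, and I rescale by $v(z):=u(x+(r/2)z)$ on $z\in B_1$. The rescaled function satisfies $(-\Delta)^s v(z)=(r/2)^{2s}g(x+(r/2)z)$ in $B_1$, and by the pointwise decay $\norm{v}_{L^\infty(B_1)}=\norm{u}_{L^\infty(B_{r/2}(x))}\le C r^{\alpha_0}$. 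Applying Proposition \ref{Holder thm} with $\varepsilon$ small enough that $2s-\varepsilon\ge\alpha_0$, then unscaling via $|z_1-z_2|=(2/r)|y_1-y_2|$, produces $|u(x)-u(y)|\le C(r^{2s-\alpha_0}+1)|x-y|^{\alpha_0}\le C|x-y|^{\alpha_0}$ with constant independent of $r$.

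The main thing to verify is that the rescaling bookkeeping is self-consistent: both the right-hand side gain $(r/2)^{2s}$ from the equation and the decay gain $r^{\alpha_0}$ on $\norm{v}_{L^\infty(B_1)}$ must compensate the loss $r^{-\alpha_0}$ coming from the final rescaling of the H\"older seminorm. This is ensured by the precise choice $\alpha_0=\min(s,1-s)$, which makes $\alpha_0\le s\le 2s-\varepsilon$ (so Proposition \ref{Holder thm} applies at exponent $\alpha_0$) and $2s-\alpha_0\ge s>0$ (so that the rescaling is a gain, not a loss, in $r$, using $r\le 1/2$). Combined with the trivial bound $\norm{u}_{L^\infty(\Omega\cap B_{1/2})}\le\norm{u}_{L^\infty(\Omega\cap B_4)}$, this delivers the claimed $C^{\alpha_0}$ estimate.
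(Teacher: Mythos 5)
Your overall strategy — pointwise decay of order $d_x^{\alpha_0}$ near $\partial\Omega$, plus a dichotomy on $|x-y|$ versus $d_x$ with a rescaled application of Proposition \ref{Holder thm} on interior balls — is the same as the paper's, up to the cosmetic difference that you rescale $u$ itself (exploiting that Proposition \ref{Holder thm} tolerates a bounded right-hand side) while the paper rescales only the $s$-harmonic part $h$, having already disposed of $w$ via \eqref{w}. The scaling bookkeeping you describe is correct.

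However, there is one genuine gap. The decay estimate \eqref{decay estimate} reads
\begin{equation*}
|h(x)|\le C\bigl(\norm{u}_{L^\infty(\Omega\cap B_4)}+\norm{u}_{\cL_{2s}}\bigr)\,d_x^{\alpha_0},
\end{equation*}
and the tail term $\norm{u}_{\cL_{2s}}$ is a \emph{global} quantity that is not controlled by $\norm{u}_{L^\infty(\Omega\cap B_4)}$ on an unbounded domain. When you write ``adding \eqref{decay estimate} yields $|u(x)|\le C(\norm{u}_{L^\infty(\Omega\cap B_4)}+\norm{g}_{L^\infty(\Omega\cap B_4)})d_x^{\alpha_0}$'' you have silently dropped this term; carried through, your argument only proves the lemma with $\norm{u}_{\cL_{2s}}$ added to the right-hand side, which defeats the purely local character of the estimate that is the whole point of the result. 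The paper closes this gap with the weak Harnack inequality (Proposition \ref{Harnack thm}): since $u\ge 0$ in $\R^n$ and $\flap u=g\ge-\norm{g}_{L^\infty(\Omega\cap B_4)}$ in the interior tangent ball $B_1(a)$, one gets
\begin{equation*}
\norm{u}_{\cL_{2s}}\le C\Bigl(\inf_{B_{1/2}(a)}u+\norm{g}_{L^\infty(B_1(a))}\Bigr)\le C\bigl(\norm{u}_{L^\infty(\Omega\cap B_4)}+\norm{g}_{L^\infty(\Omega\cap B_4)}\bigr),
\end{equation*}
and this is where the nonnegativity of $u$ is used in an essential way. You should insert this step (or an equivalent control of the tail) before concluding. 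The remaining minor points — e.g.\ that your rescaled estimate gives the seminorm only on $B_{r/4}(x)$ so the dichotomy threshold should be $r/4$ rather than $r/2$, and that the decay estimate applies to $y$ only when $d_y<\varepsilon$ (otherwise one uses the trivial bound, as you note) — are easily repaired.
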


\begin{proof}[Proof of Lemma \ref{global holder lemma}]
It suffices to prove the result for the harmonic part: 
\begin{equation}\label{holder h}
    \norm{h} _{C^{\alpha _0}(\Omega\cap B_{1/2})}\le C\{\norm{u}_{L^\infty(\Omega\cap B_4)}+\norm{g}_{L^\infty(\Omega\cap B_4)}\}.
\end{equation}
This combined with  \eqref{w}, we derive the $C^{\alpha _0}$ estimate of $u$.  

Now we prove \eqref{holder h}. Using Proposition \ref{Holder thm} to $h$ and by a standard covering argument, we obtain 
\begin{equation}\label{2.15}
    \norm{h} _{C^{\alpha _0}(\Omega _{\varepsilon/2}\cap B_1)}\le C\norm{u}_{L^\infty(\Omega\cap B_4)},
\end{equation}
Here $C$ depends on $n,s$ and $\varepsilon$. By virtue of  \eqref{decay estimate} and \eqref{2.15}, 
\begin{equation}\label{sup h}
    \norm{h} _{L^\infty(\Omega \cap B_1)}\le C\{\norm{u}_{L^\infty(\Omega\cap B_4)}+\norm{u}_{\cL_{2s}}\}.
\end{equation}

First we claim that 
\begin{equation}\label{2.17}
    \sup _{x,y\in \Omega\cap B_{1/2}}\frac{h(x)-h(y)}{|x-y|^{\alpha_0}}\le C\{\norm{u}_{L^\infty(\Omega\cap B_4)}+\norm{u}_{\cL_{2s}}\},
\end{equation}
this together with \eqref{sup h} imply
\begin{equation}\label{a1}
    \norm{h} _{C^{\alpha _0}(\Omega\cap B_{1/2})}\le C\{\norm{u}_{L^\infty(\Omega\cap B_4)}+\norm{u}_{\cL_{2s}}\}.
\end{equation}
Then we recall the weak Harnack inequality in \cite{Rosoton2019Boundary} for the special case for the fractional Laplacian: 
\begin{proposition}\label{Harnack thm}
    Assume $0<s<1$ and $C_0>0$. Then there exists a positive constant $C=C(n,s)$, such that for every nonnegative classical solution of
    \[
    \flap u(x)\ge -C_0, \quad x\in B_{1},
    \]
    we have
    \begin{equation}
        \int _{\R^n}\frac{u(y)}{(1+|y|)^{n+2s}}\,dy\le C(\inf _{B_{1/2}}u + C_0).
    \end{equation}
\end{proposition}
Applying this proposition to $u$ in $B_1(a)$, we derive that
\begin{equation}\label{a2}
    \norm{u}_{\cL_{2s}}\le C\{\inf _{B_{1/2}(a)}u +\norm{g}_{L^\infty(B_1(a))}\}\le C\{\norm{u}_{L^\infty(\Omega\cap B_4)} +\norm{g}_{L^\infty(\Omega\cap B_4)} \}.
\end{equation}
Combining \eqref{a1} and \eqref{a2}, we establish \eqref{holder h} and thus complete the proof of this lemma.

It remains to verify \eqref{2.17}. For any given $x,y\in \Omega \cap B_{1/2}$, if $x,y\in \Omega _{\varepsilon/2}\cap B_{1/2}$, it is done by the interior H\"older estimate \eqref{2.15}. 
Now we denote 
\[
d_x :=\operatorname{dist}(x,\partial\Omega),\,\,d_y :=\operatorname{dist}(y,\partial\Omega).
\]
Without loss of generality, we assume that $x\in(\Omega\backslash\Omega _{\varepsilon/2})\cap B_{1/2}$ and 
\[
r := d_x\le d_y.
\] 
 Notice that $B_r(x)\subset (\Omega\backslash \Omega _\varepsilon)\cap B_1$ for sufficiently small $\varepsilon$.
 
 We denote $h_r(z):=h(rz+x)$ in $B_1$,  then 
\[
\flap h_r  = 0\quad\text{in }B_1.
\]
By Propostion \ref{Holder thm},
\[
[h_r]_{C^{\alpha_0}(B_{1/2})}\le C\norm{h_r}_{L^\infty(B_1)},
\]
where $C$ is independent of $x$. 
That is,
\begin{equation}\label{semi h}
    [h]_{C^{\alpha _0}(B_{r/2}(x))}\le Cr^{-\alpha_0}\norm{h}_{L^\infty(B_r(x))}\le C\{\norm{u}_{L^\infty(\Omega\cap B_4)}+\norm{u}_{\cL_{2s}}\}.
\end{equation}
Here we have used the decay estimate \eqref{decay estimate}.

Then \eqref{sup h} and \eqref{semi h} imply \eqref{2.17} as shown in the following:

If $|x-y|<r/2$, then by \eqref{semi h},
\[
\frac{|h(x)-h(y)|}{|x-y|^{\alpha_0}}\le [h]_{C^{\alpha _0}(B_{r/2}(x))}\le C\{\norm{u}_{L^\infty(\Omega\cap B_4)}+\norm{u}_{\cL_{2s}}\}.
\]

If $|x-y|\ge r/2$, then $d_y\le d_x+|x-y|\le 3|x-y|$, and hence
\begin{align*}
    \frac{|h(x)-h(y)|}{|x-y|^{\alpha_0}}&\le  C(d_x^{-\alpha _0}h(x) + d_y^{-\alpha _0}h(y))\le C\{\norm{u}_{L^\infty(\Omega\cap B_4)}+\norm{u}_{\cL_{2s}}\}.
\end{align*}
Here we have used \eqref{decay estimate}.

This validates \eqref{2.17} and complete the proof of the lemma.
\end{proof}
By virtue of Lemma \ref{global holder lemma}, if $s\le 1/2$, then $\alpha _0 = \min(s,1-s)=s$, and the proof of Theorem \ref{bdry C^s} is completed. 

Now suppose $s>1/2$. Starting from the  $C^{\alpha _0}$ boundary H\"older regularity of $u$, we apply successive iterations to 
increase the H\"older index until it reaches the desired index $s$. More precisely, we increase the expontent in the decay estimate \eqref{decay estimate} step by step
and after finite many steps, it will surpass $s$.
 Then   Lemma \ref{global holder lemma} with $\alpha_0$ replaced by $s$ implies the $C^{s}$ boundary regularity of $u$.

It suffices to reconsider $J_2$. Recalling that
\[
J_2  = Cd^s \int _{B^c\cap B_{1/2}(0)\cap\Omega}\frac{u(y)}{(|y-a|^2-1)^s|b^k-y|^n}dy.
\]
From now on, we denote 
\[
A := \norm{u}_{L^\infty(\Omega\cap B_4)}+\norm{u}_{\cL_{2s}}.
\]
For any $y\in \Omega \cap B_{1/2}$, by \eqref{decay estimate}, we have  $$u(y)\le CA|y|^{1-s}.$$ It follows that
\begin{align*}
    J_2  &= Cd^s \int _{B^c\cap B_{1/2}(0)\cap\Omega}\frac{u(y)}{(|y-a|^2-1)^s|b^k-y|^n}dy\\
    & \le CAd^s\int _0^{\frac{1}{2}}\int _{-M r^2}^{1-\sqrt{1-r^2}}\frac{(r^2+y_n^2)^{\frac{1-s}{2}}}{(r^2+y_n(y_n-2))^s}\frac{r^{n-2}}{(r^2+(d-y_n)^2)^{\frac{n}{2}}}dy_n\,dr\\
    &  = CAd^s\int _0^{\frac{1}{2}} r^{1-s}\cdot r^{-2s}\int _{-M }^{\frac{1}{1+\sqrt{1-r^2}}}\frac{(1+r^2z^2)^{\frac{1-s}{2}}}{(1+z(r^2z-2))^s}\frac{1}{(1+(\frac{d}{r}-rz)^2)^{\frac{n}{2}}}dz\,dr\\
     & \le CAd^s\int _0^{\frac{1}{2}} r^{1-3s}\int _{-M }^{\frac{1}{1+\sqrt{1-r^2}}}\frac{1}{(1+z(r^2z-2))^s}\frac{1}{(1+(\frac{d}{r}-rz)^2)^{\frac{n}{2}}}dz\,dr\\
   & = CAd^s\left(\int _0^{d}+\int _{d}^{\frac{1}{2}} \right)r^{1-3s}\int _{-M}^{\frac{1}{1+\sqrt{1-r^2}}}\frac{1}{(1+z(r^2z-2))^s}\frac{1}{(1+(\frac{d}{r}-rz)^2)^{\frac{n}{2}}}dz\,dr\\
   &:= J_{21}+J_{22}.
\end{align*}
Similarly, we have:
\begin{align*}
    J_{21} &\le Cd^{s-n}A\int _0^dr^{n+1-3s}dr\int _{0}^{C(M)}\frac{1}{t^s}dt\\
    & \le C(M)Ad^{2-2s}.
\end{align*}
\begin{align*}
    J_{22}&\le C\norm{u}_{L^\infty(\Omega\cap B_4)}d^s\int _d^{\frac{1}{2}}r^{1-3s}dr\int _0^{C(M)}\frac{1}{t^s}dt\\
    &\le CA(d^s+d^{2-2s}).
\end{align*}
Define $\alpha _1 =\min (s,2-2s)$.  Lemma \ref{global holder lemma} implies 
\[
\norm{u} _{C^{\alpha _1}(\Omega\cap B_{1/2})}\le C(\norm{u}_{L^\infty(\Omega\cap B_4)}+\norm{g}_{L^\infty(\Omega\cap B_4)}).
\]
If $s\le 2/3$, then $\alpha _1 = s$ and we are done. Otherwise we repeat the above procedure  to improve the H\"older index. Continuing this way and after $k$ steps, we  derive the $C^{\alpha _k}$ estimate of $u$, where
\[
\alpha  _k = \min\{s,k+1-(k+1)s\}.
\]
Since $s\in (0,1)$ is a fixed number, for sufficiently large $k$, we have $s\le \frac{k+1}{k+2}$, and consequently, we derive
\[
\norm{u} _{C^{s}(\Omega\cap B_{1/2})}\le C(\norm{u}_{L^\infty(\Omega\cap B_4)}+\norm{g}_{L^\infty(\Omega\cap B_4)}).
\]
This completes the proof of Theorem \ref{bdry C^s}.   
\end{proof}

\section{A Priori Estimates of Solutions}

The boundary estimates in the previous section enable us to apply the blow-up and rescaling argument to derive a priori estimates for nonnegative solutions to nonlinear fractional equations on unbounded domains with boundary. 

Consider the equation
\begin{equation}\label{AP1}
\begin{cases}
    \flap u (x) = f(x,u)  , & x\in \Omega,\\
    u (x)  = 0 , &x\in \Omega ^c.
\end{cases}
\end{equation} 
Here $\Omega \subset \R ^n$ is an unbounded domain with uniformly $C^{1,1}$ boundary, $f$ satisfies the following condition:
\begin{itemize}
    \item $f(x,t):\Omega \times [0,\infty)\to\R$ is uniformly H\"older continuous with respect to $x$ and continuous with respect to $t$.
\end{itemize}
\begin{theorem}\label{A1}
Assume $1<p<\frac{n+2s}{n-2s}$,  $f$ satisfies 
\begin{equation}\label{growth f}
    f(x,t)\le C_0(1+t^p)
\end{equation}
uniformly for all $x$ in $\Omega$, and 
\begin{equation}\label{lim f}
    \lim _{t\to\infty}\frac{f(x,t)}{t^p} = K(x),
\end{equation}
    where $K(x)\in (0,\infty)$ is uniformly continuous and $K(\infty):=\lim _{|x|\to\infty }K(x) \in (0,\infty)$. 
Then there exists a constant $C$, such that 
\begin{equation}\label{bd1}
u(x)\le C,\quad\forall \, x\in \Omega
\end{equation}
holds for all nonnegative solutions $u$ of \eqref{AP1}.
\end{theorem}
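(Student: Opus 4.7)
The plan is to argue by contradiction in the style of the Gidas--Spruck blow-up and rescaling scheme, adapted to unbounded domains where rescaled functions enjoy no global $L^\infty$ bound. Suppose the conclusion fails, so that there exist nonnegative solutions $u_i$ of \eqref{AP1} with $\sup_\Omega u_i \to \infty$. I would apply the Polacik--Quittner--Souplet doubling lemma to $u_i^{(p-1)/(2s)}$ to produce points $x_i \in \Omega$ satisfying
\[
M_i := u_i(x_i) \to \infty, \qquad u_i \le 2^{2s/(p-1)} M_i \text{ on } \Omega \cap B_{R_i \lambda_i}(x_i),
\]
where $\lambda_i := M_i^{-(p-1)/(2s)} \to 0$ and $R_i \to \infty$. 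Writing $d_i := \operatorname{dist}(x_i,\partial\Omega)$, $\mu_i := d_i/\lambda_i$, $\Omega_i := \lambda_i^{-1}(\Omega - x_i)$, and $v_i(y) := M_i^{-1} u_i(x_i + \lambda_i y)$, one obtains $v_i(0)=1$, $v_i$ uniformly bounded on $\Omega_i \cap B_{R_i}$, $v_i \equiv 0$ on $\Omega_i^c$, and
\[
\flap v_i(y) = g_i(y) := \lambda_i^{2s} M_i^{-1} f(x_i + \lambda_i y,\, M_i v_i(y)),
\]
with $\norm{g_i}_{L^\infty(\Omega_i \cap B_{R_i})} \le C$ by \eqref{growth f} and the identity $\lambda_i^{2s} M_i^{-1} = M_i^{-p}$.

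The crucial step is to invoke Theorem \ref{1bdry C^s} on fixed-size balls centered at boundary points of $\Omega_i$: because it uses only \emph{local} $L^\infty$ norms of $v_i$ and $g_i$, which are now under control, it yields uniform $C^s$ regularity of $v_i$ up to $\partial \Omega_i$ on every fixed compact set. Applying this estimate between $0$ and its nearest boundary point gives $1 = v_i(0) \le C \mu_i^s$, so $\mu_i$ is bounded below and, along a subsequence, $\mu_i \to \mu_\infty \in (0,\infty]$. Uniform $C^{1,1}$ regularity of $\partial\Omega$ ensures that, after a rigid motion, $\partial \Omega_i$ converges locally to the hyperplane $\{y_n = -\mu_\infty\}$ when $\mu_\infty < \infty$ and recedes from every compact set when $\mu_\infty = \infty$; the limit domain $H$ is therefore $\R^n$ or the half-space $\{y_n > -\mu_\infty\}$, and in either case $0 \in H$. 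Bootstrapping with the interior estimate Proposition \ref{Holder thm} and its iterates promotes the uniform control to $C^{2s+\beta}_{loc}(H)$, so along a further subsequence $v_i \to v$ in $C^{2s+\beta}_{loc}(H)$, with $v \ge 0$, $v(0) = 1$, $v$ bounded on $\R^n$, and $v \equiv 0$ on $H^c$; in particular $v \in \cL_{2s}$.

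To pass to the limit inside the nonlocal operator I invoke Theorem \ref{conv} (and, in the case $H = \R^n$, its analog on the whole space), which furnishes a constant $b \ge 0$ with $\flap v_i(y) \to \flap v(y) - b$ pointwise in $H$. Simultaneously, \eqref{lim f} together with the uniform continuity of $K$ and the hypothesis $K(\infty) \in (0,\infty)$ (covering the case $|x_i| \to \infty$) give $g_i(y) \to K_* v(y)^p$ pointwise, where $K_* = K(\bar x) \in (0,\infty)$ for the limit $\bar x \in \R^n \cup \{\infty\}$ of $x_i$. Hence $v$ is a nontrivial nonnegative bounded solution of
\[
\flap v = K_* v^p + b \text{ in } H, \qquad v \equiv 0 \text{ on } H^c.
\]
Since $1 < p < (n+2s)/(n-2s)$, the subcritical Liouville theorems for the fractional Lane--Emden equation — on $\R^n$ and on a half-space with zero exterior Dirichlet data — force $v \equiv 0$, contradicting $v(0) = 1$, which completes the proof.

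The main obstacle is obtaining uniform regularity of $v_i$ up to $\partial\Omega_i$ in the absence of any global $L^\infty$ bound on $u_i$; this is exactly the gap closed by Theorem \ref{1bdry C^s}, and the blow-up scheme would collapse with the previously available boundary estimates that require a global bound. A secondary delicate point is the nonlocal passage to the limit, handled via Theorem \ref{conv}; the resulting additive constant $b \ge 0$ is harmless because the Liouville step tolerates any nonnegative lower-order source on top of $K_* v^p$.
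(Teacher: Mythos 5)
Your blow-up scheme is essentially the paper's own proof: your use of the Polacik--Quittner--Souplet doubling lemma plays exactly the role of the paper's maximization of $S_k(x)=u_k(x)(r_k-|x-x^k|)^{2s/(p-1)}$; ruling out boundary collapse ($d_i/\lambda_i\to 0$) by applying the new local boundary $C^s$ estimate (Theorem \ref{1bdry C^s}) to the rescaled functions and concluding $1=v_i(0)\le C\mu_i^s$ is the paper's first case; and the remaining two cases (limit domain $\R^n$ or a half-space), the interior H\"older/Schauder bootstrap, the nonlocal convergence statement producing the constant $b\ge 0$ (Theorem \ref{conv} and its whole-space antecedent), and the final appeal to subcritical Liouville theorems all coincide with the paper's argument.

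There is, however, one step you assert rather than prove, and it is not covered by the results you cite: the claim that ``the Liouville step tolerates any nonnegative lower-order source on top of $K_*v^p$.'' The subcritical Liouville theorems for the fractional Lane--Emden problem (on $\R^n$ and on the half-space with zero exterior data) are stated for $\flap v = c\,v^p$ exactly, not for $\flap v = K_*v^p + b$ with $b>0$; so as written your contradiction does not follow from a citation. The paper closes this by showing $b=0$ before invoking Liouville: it compares $v$ with the Riesz potential (resp.\ half-space Green potential) of the truncated right-hand side, i.e.\ $v_R(x)=\int_{B_R} G_R(x,y)\bigl(K_*v^p(y)+b\bigr)\,dy$, uses the maximum principle to get $v\ge v_R$, lets $R\to\infty$, and observes that if $b>0$ then $v(x)\ge b\int G_\infty(x,y)\,dy=+\infty$, contradicting the finiteness (indeed boundedness) of $v$. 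The underlying fact you are relying on is true, but it requires this short potential-theoretic argument (or an equivalent Liouville statement for the equation with a positive constant source); you should supply it explicitly to complete the proof. Everything else in your outline matches the paper and is sound.
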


\subsection{Proof of Theorem \ref{AP1}}
We will prove Theorem \ref{A1} by a contradiction argument. Suppose \eqref{bd1} is violated, then there exist a sequence of solutions $u_k$ of \eqref{AP1} and a sequence of points  $\{x^k\}\subset \Omega$ such that
\begin{equation}\label{contradiction}
    u_k(x^k) >k .
\end{equation}
Different from the bounded domain cases, each $x^k$ may not be chosen to be the maximum point of $u_k$, hence $u_k(x^k)$ cannot be the upper bound of $u_k$ in $\Omega$. However, we can choose a point $a^k$ near $x^k$, such that $u_k(a^k)$ can control $u_k$ in a neighborhood of $a^k$, and this is sufficient  for us to carry out the blow-up and rescaling. 

For any fixed large $R$, let
\[
r_k=2Ru_k^{-\frac{p-1}{2s}}(x^k).
\]

Define a function
\[
S_k(x)  = u_k(x) (r_k- |x-x^k|)^{\frac{2s}{p-1}}, \quad x\in \overline{B_{r_k}(x^k)}.
\]
Note that $\overline{B_{r_k}(x^k)}\cap \Omega ^c$ may not be empty.
Since $S_k$ vanishes at $\partial B_{r_k}(x^k)$, we can find $a^k\in B_{r_k}(x^k)$ such that
\[
S_k(a^k)  =\max _{B_{r_k}(x^k)}S_k(x)
\]
That is,
\begin{equation}\label{db}
    u_k(x)\le u_k(a^k)\frac{(r_k- |a^k-x^k|)^{\frac{2s}{p-1}}}{(r_k- |x-x^k|)^{\frac{2s}{p-1}}},\quad x\in B_{r_k}(x^k)
\end{equation}
In particular, taking $x= x^k$ in \eqref{db} yields,
\begin{equation}\label{xk}
    u_k(x^k)\le u_k(a^k)\frac{(r_k- |a^k-x^k|)^{\frac{2s}{p-1}}}{r_k^{\frac{2s}{p-1}}},
\end{equation}
which implies that
\[
u_k(x^k)\le u_k(a^k).
\]
Hence $a^k\in B_{r_k}(x^k)\cap \Omega$.
Set
\begin{equation}
    \lambda _k  = (u_k(a^k))^{-\frac{p-1}{2s}}.
\end{equation}
It follows from the definition of $r_k$ and \eqref{xk} that 
\begin{equation}\label{2rlmbda}
    2R\lambda _k \le r_k - |a^k-x^k|.
\end{equation}
Then for any $x\in B_{R\lambda _k}(a^k)$,
\[
|x-x^k| \le |x-a^k|+|a^k-x^k|\le   R\lambda _k +  |a^k-x^k|<r_k.
\]
This implies
\[
B_{R\lambda _k}(a^k)\subset B_{r_k}(x^k).
\]

In addition, one can derive
\begin{equation}\label{db ieq}
     r_k-|a^k-x^k|\le 2(r_k-|x-x^k|),\quad x\in B_{R\lambda _k}(a^k).
\end{equation}
Indeed, for any $x\in B_{R\lambda _k}(a^k)$, using \eqref{2rlmbda},
\begin{align*}
    r_k-|a^k-x^k| & =  r_k-|a^k-x^k| +2R\lambda _k-2R\lambda _k\\
    & \le 2(r_k-|a^k-x^k|-R\lambda _k)\\
    &\le 2(r_k- (|a^k-x^k| + |x-a^k| ))\\
    & \le 2(r_k- |x-x^k|).
\end{align*}

The combination of \eqref{db} with \eqref{db ieq} yields
\begin{equation}
    u_k(x)\le u_k(a^k) \cdot 2^{\frac{2s}{p-1}},\quad x\in B_{R\lambda _k}(a^k).
\end{equation}

Now we rescale the solution as 
\[
v_k(x) = \frac{1}{u_k(a^k)}u_k(\lambda _kx+a^k), \quad x\in \R ^n.
\]
By \eqref{AP1}, it is easy to check that the nonnegative function $v_k$ satisfies
\begin{equation}
    \begin{cases}
        \flap v_k(x)   = F_k(x,v_k(x)) , & x\in B_R(0)\cap \Omega _k,\\
    v_k(x)\le 2 ^{\frac{2s}{p-1}}, & x\in B_R(0),\\
    v_k(0) = 1.
    \end{cases}
\end{equation}
Here 
\[
\Omega _k:= \{x\in \R ^n : \lambda _kx+a^k\in \Omega \},
\]
\begin{equation}
    F_k(x,v_k(x))  = \lambda _k ^{\frac{2sp}{p-1}}f(\lambda _kx+a^k, \lambda _k ^{-\frac{2s}{p-1}}v_k(x)).
\end{equation}
Then the growth condition \eqref{growth f} indicates
\begin{equation}\label{Fk bdd}
    F_k(x,v_k(x))\le C_0(1+v_k^p(x)),\quad \forall\, x\in\Omega _k.
\end{equation}
Now we denote 
\[
d_k =  \mathrm{dist}(a^k, \partial \Omega).
\]
Recalling $\lambda _k\to 0$ as $k\to\infty$, we can always find a subsequence of $\{\frac{d_k}{\lambda _k}\}$ (still denoted as $\{\frac{d_k}{\lambda _k}\}$), such that $\lim _{k\to\infty}\frac{d_k}{\lambda  _k}$ exists. According to the value of the limit, it can be divided into three cases: 0, $\infty$, and a positive number $H$.

We first claim that the case 
\begin{equation}\label{lim0}
    \lim _{k\to\infty}\frac{d_k}{\lambda  _k} = 0
\end{equation}
cannot happen. 

Suppose  \eqref{lim0} holds.
Since the distance $\mathrm{dist}( a^k, \partial \Omega ) = d_k\to 0$ as $k\to\infty$, for sufficiently large $k$, we can find a unique point $ p^k\in  \partial \Omega$, such that 
\[
| p^k- a^k| = \mathrm{dist}( a^k, \partial \Omega ).
\]
Suppose $\partial \Omega$ can be written as a graph of $C^{1,1}$ function near $p^k$ as follows: 

If $z\in\partial \Omega $ is near to $p^k$, then 
\begin{equation}
    z_n = \varphi_k(z') 
\end{equation}
for some $C^{1,1}$ function $\varphi _k$ satisfying 
\begin{equation}
    \norm{\varphi _k} _{C^{1,1}(B_1({p^k}'))}\le M
\end{equation}
for some positive constant $M$. 

Denote $q^k = \frac{p^k-a^k}{\lambda _k}$ Consider an isometry $T$ such that $T(q^k)= 0 $ and $\tilde a^k := T(0)= (0',d_k/\lambda_k)\in \R^{n-1}\times \R$, denote $\tilde \Omega _k = T(\Omega _k)$ with 
\[
\Omega _k:= \{y\in \R ^n : \lambda _ky+a^k\in \Omega \}
\]

Define $\tilde v_k : = v_k\circ T^{-1}$, and $\tilde F_k(x, \tilde v_k(x))=F_k(T^{-1}x, \tilde v_k(x))$.
\begin{equation}\label{tvk}
    \begin{cases}
        \flap \tilde v_k(x)  = \tilde F_k(x, \tilde v_k(x)), & x\in B_R(\tilde a^k)\cap \tilde\Omega _k,\\
     \tilde v_k(x)\le 2 ^{\frac{2s}{p-1}}, & x\in B_R(\tilde a^k),\\
   \tilde v_k(\tilde a^k) = 1.
    \end{cases}
\end{equation}
And $\partial \tilde\Omega _k$ can be written as, if $ x\in\partial \tilde\Omega _k \cap B_1$, then
\[
x_n = \psi _k (x') 
\]
with 
\[
\psi(0)=0, \quad \nabla\psi(0)= 0.
\]
 
A direct computation implies 
\begin{equation}
    \norm{\psi_k}_{C^{1,1}(B_1)}\le \lambda _k\norm{\varphi _k}_{C^{1,1}(B_{2\lambda _k}({p^k}'))}\le \lambda _kM.
\end{equation}

Roughly speaking, the boundary $\partial \tilde\Omega _k$ tends to become flatter as the blowing up process continues.
Especially, for sufficiently large $k$, the $C^{1,1}$ norm of $\partial \tilde\Omega _k$ near $B_1$ is uniformly bounded with respect to $k$.

We will prove that $\tilde v_k(\tilde a^k) \to 0$ as $k\to\infty$ by using the boundary H\"older estimate derived in Section 2, which contradicts the fact that $\tilde v_k(\tilde a^k) = 1$ for each $k$.

If we choose $R$ large enough such that $ B_1\subset B_R(\tilde a^k)$, then Theorem \ref{bdry C^s} for $\tilde v_k$ in $ \tilde\Omega_k\cap B_1$ implies
\begin{align*}
    \norm{\tilde v_k}_{C^s(B_{1/8}(0))}&\le C.
\end{align*}
Then
\[
\lim _{k\to\infty}| \tilde v_k(b^k)-\tilde v_k(0)| =0.
\]
Given that $\tilde{v}_k(\tilde a^k) \to 0$, the case where $d_k / \lambda_k \to 0$ cannot occur.

It remains two cases: $\lim _{k\to\infty}\frac{d_k}{\lambda _k} = \infty$, and $\lim _{k\to\infty}\frac{d_k}{\lambda _k}=H>0$.

Suppose
\[
\lim _{k\to\infty}\frac{d_k}{\lambda _k} = \infty.
\]
For any given $R$ and for sufficiently large $k$, $B_R(0)\subset  \Omega _k$, and $v_k$ satisfies 
\begin{equation}\label{vk}
    \begin{cases}
        \flap v_k(x)  = F_k(x,v_k(x)), & x\in B_R(0),\\
    v_k(x)\le 2 ^{\frac{2s}{p-1}}, & x\in B_R(0),\\
    v_k(0) = 1.
    \end{cases}
\end{equation}
Lemma \ref{Holder thm} implies that for all $0<\varepsilon<2s$,
\[
\norm{v_k}_{C^{[2s-\varepsilon],\{2s-\varepsilon\}}(B_{3R/4}(0))}\le C.
\]

Based on the H\"older continuity of the right hand side, in order to enhance the regularity to a higher order, we apply the following estimates from \cite{chen2025refinedregularitynonlocalelliptic}.

\begin{proposition}[Interior Schauder regularity]\label{Schauder thm}
    Assume $0<s<1$, and $u\in C^{1,1}_{loc}(\R^n)\cap \mathcal{L}_{2s}$ is a nonnegative solution of 
    \begin{equation}
        \flap u(x) = f(x),\quad x\in B_1
    \end{equation}
    where $f\in C^\alpha(B_1)$ for some $0<\alpha<1 $. Then for any $0<\varepsilon <2s$,
    \begin{equation}
        \norm {u}_{C^{[2s+\alpha-\varepsilon],\{2s+\alpha-\varepsilon\}}(B_{1/2})}\le C(\norm{f}_{C^\alpha(B_1)}+\norm{u}_{L^\infty(B_1)}).
    \end{equation}
\end{proposition}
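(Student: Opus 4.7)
The plan is to combine a classical interior Schauder estimate for $\flap$, which naturally carries the nonlocal tail norm $\norm{u}_{\cL_{2s}}$ on the right-hand side, with the weak Harnack inequality (Proposition \ref{Harnack thm}) to trade that global tail for a purely local $L^\infty$ bound. Nonnegativity of $u$ is used only in this second step.

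First, I would establish the standard interior Schauder estimate of Silvestre--Ros-Oton--Serra type,
\begin{equation*}
\norm{u}_{C^{[2s+\alpha-\varepsilon],\{2s+\alpha-\varepsilon\}}(B_{1/2})} \le C\bigl(\norm{f}_{C^\alpha(B_1)} + \norm{u}_{\cL_{2s}}\bigr),
\end{equation*}
via the splitting $u = u\eta + u(1-\eta) =: u_1 + u_2$ for a smooth cutoff $\eta$ supported in $B_{7/8}$ with $\eta \equiv 1$ on $B_{3/4}$. The far-field piece $u_2$ vanishes on $B_{3/4}$, so for $x \in B_{1/2}$ the fractional Laplacian $\flap u_2(x)$ is an integral against a smooth kernel and satisfies $\norm{\flap u_2}_{C^k(B_{1/2})} \le C_k \norm{u}_{\cL_{2s}}$ for every $k$, by differentiating the kernel $|x-y|^{-n-2s}$ in $x$ under the integral. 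The compactly supported piece $u_1$ then solves a Dirichlet problem $\flap u_1 = f - \flap u_2$ on $B_{7/8}$ with $C^\alpha$ right-hand side and zero exterior data, whose interior regularity on $B_{1/2}$ (strictly inside the support of $\eta$) follows from the classical Schauder machinery for compactly supported solutions; the $\varepsilon$ loss appears only at the borderline integer exponent $2s+\alpha \in \mathbb{Z}$.

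Second, to eliminate $\norm{u}_{\cL_{2s}}$ from the right-hand side, I apply Proposition \ref{Harnack thm} directly to $u$. Since $u \ge 0$ and $\flap u = f \ge -\norm{f}_{L^\infty(B_1)}$ in $B_1$, the weak Harnack inequality yields
\begin{equation*}
\norm{u}_{\cL_{2s}} \le C\bigl(\inf_{B_{1/2}} u + \norm{f}_{L^\infty(B_1)}\bigr) \le C\bigl(\norm{u}_{L^\infty(B_1)} + \norm{f}_{C^\alpha(B_1)}\bigr).
\end{equation*}
Substituting this bound into the Schauder estimate from the first step produces the claimed inequality.

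The main obstacle is the bookkeeping in the cutoff argument of the first step: one must carefully track how the smoothness of the far-field contribution $\flap u_2$ combines with the $C^\alpha$ regularity of $f$ on the Dirichlet problem for $u_1$, and confirm that the standard machinery yields true $2s+\alpha$ interior regularity on $B_{1/2}$ (rather than the weaker $C^s$ boundary-type regularity that would arise if we tried to read off an estimate at $\partial B_{7/8}$). Once this decomposition is in hand, the passage from $\cL_{2s}$ to $L^\infty(B_1)$ via weak Harnack is automatic from the sign of $u$, and no further ingredient is required.
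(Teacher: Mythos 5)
The paper does not actually prove this proposition: it is quoted from \cite{chen2025refinedregularitynonlocalelliptic}, so there is no internal argument to compare with, and your proof stands as an independent derivation. It is essentially correct. Step one is the standard interior Schauder estimate for the fractional Laplacian with the nonlocal tail $\norm{u}_{\cL_{2s}}$ on the right-hand side (plus, strictly speaking, $\norm{u}_{L^\infty(B_1)}$, which your cutoff argument produces through $\norm{u_1}_{L^\infty(\R^n)}$ and which is harmless since it appears in the final bound anyway); step two, trading the tail for local quantities via the weak Harnack inequality of Proposition \ref{Harnack thm} applied to $u\ge 0$ with $\flap u = f \ge -\norm{f}_{L^\infty(B_1)}$ in $B_1$, is precisely the device the authors themselves use in the proof of Lemma \ref{global holder lemma} (see \eqref{a2}), so your route is very much in the spirit of how the paper handles the tail elsewhere, even though it outsources this particular statement. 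Two points to tighten. First, the identity $\flap u_1 = f - \flap u_2$ has a controlled $C^\alpha$ right-hand side only where $u_2$ vanishes, i.e.\ in $B_{3/4}$, not on all of $B_{7/8}$: on the annulus $B_{7/8}\setminus B_{3/4}$ the term $\flap u_2$ is not bounded by the stated norms. Since $B_{1/2}\Subset B_{3/4}$ this is only a matter of adjusting radii (apply the interior machinery on, say, $B_{5/8}$ and restrict to $B_{1/2}$), but it should be said. Second, the weak Harnack inequality requires $u\ge 0$ in all of $\R^n$, not merely in $B_1$; this is the intended reading of ``nonnegative solution'' here (in the paper's application $u$ is a rescaled solution that is nonnegative everywhere and vanishes outside $\Omega$), but you should make the global sign assumption explicit, since it is exactly what allows the tail $\norm{u}_{\cL_{2s}}$ to be absorbed into local quantities.
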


It follows from this proposition that there exist some positive constants $\beta \in (0,1)$ and $C$ such that
\begin{equation}\label{vk schauder}
    \norm{v_k}_{C^{[2s+\beta],\{2s+\beta\}}(B_{R/2}(0))}\le C.
\end{equation}
We have already shown that given any $R>0$, there exists a large $K>0$, such that for any $k>K$, $v_k$ satisfies \eqref{vk schauder}. Applying the Arzel\'a-Ascoli theorem and a standard diagonal argument, we conclude that up to a subsequence, $\{v_k\}$ converges to some $v$ in $C^{[2s+\beta],\{2s+\beta\}}_{loc}(\R ^n)$.  \eqref{vk} implies that  $v\in C^{[2s+\beta],\{2s+\beta\}}_{loc}(\R ^n)$ is bounded in $\R ^n$, hence $v\in \mathcal{L}_{2s}(\R^n)$ and $\flap v$ is well-defined. Moreover, 
\begin{equation}
    v(0) = \lim _{k\to\infty}v_k(0) = 1.
\end{equation}
According to the equation, $\flap v_k$ converges pointwisely because the right hand side converges pointwisely. Therefore, a convergence result \cite{Du2023blowup} shows that: 
There exists a constant $b\ge 0$ such that 
\begin{equation}
    \lim _{k\to\infty}\flap v_k(x) = \flap v(x) - b,\quad \forall\, x\in\R ^n.
\end{equation}
On the other hand, recalling that
\[
F_k(x,v_k(x)) = \lambda _k ^{\frac{2sp}{p-1}}f(\lambda _kx+a^k, \lambda _k ^{-\frac{2s}{p-1}}v_k(x)).
\]
For any fixed $x$, recalling that $\lambda _k\to 0$ as $k\to\infty$. Therefore $\{\lambda_k x+a^k\}$ converges to a point $a$ (it may be $\infty$) which is independent of $x$.
Then in the sense of a subsequence,
\[
\lim _{k\to\infty}F_k(x,v_k(x)) = K(a)v^p(x).
\]
Due to the equation, 
\[
\lim _{k\to\infty}\flap v_k(x) = \lim _{k\to\infty} F_k(x,v_k(x))= K(a)v^p(x),\quad \forall\, x\in\R ^n.
\]
Therefore $v\in C^{[2s+\beta],\{2s+\beta\}}_{loc}(\R ^n)\cap \mathcal{L}_{2s}(\R^n)$ satisfies
\[
 \flap v(x)  = K(a)v^p(x) + b,\quad \forall\, x\in\R ^n.
\]
We now show that $b=0$. For any $R>0$, define
\[
v_R(x) = C_{n,s}\int _{\R^n}\frac{\chi _{B_R(0)}(y)(K(a)v^p(y) + b)}{|x-y|^{n-2s}} dy.
\]
then $v-v_R$ satisfies:
\begin{equation}
    \begin{cases}
        \flap(v-v_R)(x)\ge 0, \quad x\in \R^n,\\
        \liminf _{|x|\to\infty}(v-v_R)(x)\ge 0.
    \end{cases}
\end{equation}
By the maximum principle, we have $v-v_R\ge 0$ in $\R^n$. Let $R\to\infty$, we have
\begin{equation}
    v(x)\ge C_{n,s}\int _{\R^n}\frac{K(a)v^p(y) + b}{|x-y|^{n-2s}}dy.
\end{equation}
If $b>0$, the integral on the right hand side divergence, which contradicts the fact that $v(x)$ is finite. Hence $b=0$. Therefore the nonnegative function $v$ satisfies 
\begin{equation}
    \flap v(x) = K(a)v^p(x),\quad x\in\R^n.
\end{equation}
The Liouville theorem in \cite {Chen2017direct} indicates that $v\equiv 0$ if $1<p<\frac{n+2s}{n-2s}$, which contradicts  $v(0)=1$.

Suppose
\[
\lim _{k\to\infty}\frac{d_k}{\lambda _k} = H
\]
for some positive constant $H$. Since $\lambda _k\to 0$, the domain $ \Omega_k$ becomes the half space $$\R^n_{+H}:=\{x\in \R ^n: x_n>-H\}.$$ Unlike the previous case, no matter how large $k$ is, when $R$ is sufficiently large, $B_R(0)$ will not be contained in $\Omega _k$. 

We will prove that $\{v_k\}$ converges to a function $v$ in $C^{[2s+\beta],\{2s+\beta\}}_{loc}(\R^n_{+H})$ in the sense of subsequence. That is, for any compact set $E\subset \R^n_{+H}$, $v_k\to v$ in $C^{[2s+\beta],\{2s+\beta\}}(E)$. We use the similar argument as in \cite{Chen2016directblowup}. 

Fix any positive integer $m$, consider $R=2m$. Define
\[
D_m := \{x\in \overline{B_m(0)} : x_n\ge -H\frac{m-1}{m}\}.
\]
For sufficiently large $k$, we may assume 
\begin{itemize}
    \item $D_m\subset B_R\cap  \Omega_k$,
    \item $B_{\frac{H}{2m}}(x)\subset  \Omega_k$ for any $x\in D_m$,
\end{itemize}
because $ \tilde\Omega _k$ is almost flat. Then $v_k$ is uniformly bounded in each $B_{\frac{H}{2m}}(x)$.

Applying the interior H\"older and Schauder estimates in $B_{H/2m}(x)$,  similar to the bootstrap argument in the previous case, we obtain that there exist positive constants $\beta\in (0,1)$ and $C$ such that
\begin{equation}
    \norm{v_k} _{C^{[2s+\beta],\{2s+\beta\}}\left(B_{H/4m}(x) \right)}\le C.
\end{equation}
Then 
\begin{align}
    \norm {v_k} _{C^{[2s+\beta],\{2s+\beta\}}(D_m)}&\le \sup _{x\in D_m}\norm{v_k}_{C^{[2s+\beta],\{2s+\beta\}}\left(B_{H/4m}(x) \right)} + C(H)m^{\{2s+\beta\}}\sup _{x\in D_m}\norm{v_k}_{C^{[2s+\beta]}\left(B_{H/4m}(x)\right)}\notag\\
    & \le C(H,m). \label{aa}
\end{align}
The Arzel\'a-Ascoli theorem and a standard diagonal argument implies that up to a subsequence, $\{v_k\}$ converges to some $v$ in $C^{[2s+\beta],\{2s+\beta\}}(D_m)$
for each positive integer $m$. Therefore $\{v_k\}$ converge to $v$ in $C^{[2s+\beta],\{2s+\beta\}}_{loc}(\R^n_{+H})$.

Unlike in the previous case, we cannot apply Du-Jin-Xiong-Yang's convergence result directly because their results require the convergence in $C^{[2s+\beta],\{2s+\beta\}}_{loc}(\R^n)$. We refine their results in the Appendix to the case of convergence in any unbounded domain $\Omega$ with boundary. Then similarly we have
\[
\begin{cases}
    \flap v(x) = K(a)v^p(x)+b,& x\in \R^n_+,\\
    v(x) = 0,& x\in  {\R^n_-},
\end{cases}
\]
here we have made a translation in $x_n$ direction. 

To show $b=0$, we introduce 
\[
v_R(x) = \int _{B_R(P_R)}G_R(x,y)(K(a)v^p(y)+b)dy,
\]
Here $G_R$ is the Green function in $B_R(P_R)$ with $P_R = (0,\cdots,0, R)$. The argument is similar: $v-v_R$ satisfies
\[
\begin{cases}
    \flap(v-v_R)\ge 0, & x\in B_R(P_R),\\
    v-v_R\ge0,& x\in \R^n\backslash B_R(P_R).
\end{cases}
\]
The maximum principle implies $v\ge v_R$ in $B_R(P_R)$. Let $R\to\infty$ we have
\begin{equation}
    v(x)\ge \int _{\R^n_+}G_\infty(x,y)(v^p(y)+b)dy,
\end{equation}
where $G_\infty(x,y)$ is the Green function in $\R^n_+$:
\[
G_\infty(x,y) = \frac{C_{n,s}}{|x-y|^{n-2s}}\int _{0}^{\frac{4x_ny_n}{|x-y|^2}}\frac{b^{s-1}}{(b+1)^{\frac{n}{2}}}db.
\]
We have used the fact that 
$G_R(x,y)\to G_\infty(x,y)$ as $R\to\infty$, and we refer to\cite[Lemma 3.2]{Fall2016monotonicity} for more details. If $b> 0$, 
\[
v(x) \ge b\int _{\R^n_+} G_\infty(x,y)dy = \infty,
\]
which contradicts the fact that $v(x)$ is finite. Therefore $b=0$ and $v$ satisfies
\begin{equation}
    \begin{cases}
    \flap v(x) = K(a)v^p(x),& x\in \R^n_+,\\
    v(x) = 0,& x\in  {\R^n_-},
\end{cases}
\end{equation}
The Liouville theorem in half space \cite{Chen2015Liouville,Fall2016monotonicity} implies that if $1<p<\frac{n+2s}{n-2s}$, then $v\equiv0$. This contradicts the fact that $v(0)=1$.

In summary, assuming that the a priori estimate \eqref{bd1} does not hold inevitably leads to a contradiction. Therefore we have proved its validity and complete the proof.

\section{Appendix}
Here is a generalization  of \cite[Theorem 1.1]{Du2023blowup}Du-Jin-Xiong-Yang's convergence result.
\begin{theorem}
    Assume $n\ge 1$, $s\in (0,1)$, $\beta\in (0,1)$. $\Omega$ is an unbounded domain with boundary. Suppose nonnegative functions $\{u_i\}\subset \mathcal{L}_{2s}\cap C^{2s+\beta}_{loc}(\Omega)$ vanishes outside $\Omega$, and $u\in \mathcal{L}_{2s}$ vanishes outside $\Omega$. If $\{u_i\}$
    converges to a function $u\in \mathcal{L}_{2s}$ in $C^{2s+\beta}_{loc}(\Omega)$, and $\{\flap u_i \}$ converges pointwisely in $\Omega$, then there exists a constant $b\ge 0 $ such that
    \begin{equation}
        \lim _{i\to\infty}\flap u_i(x) = \flap u(x) -b.
    \end{equation}
\end{theorem}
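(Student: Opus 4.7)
The strategy is to analyze the defect $c(x) := \flap u(x) - \lim_{i \to \infty} \flap u_i(x)$, show that it is well-defined pointwise in $\Omega$ and equals a single nonnegative constant $b$; the claim then follows by setting this $b$. The heuristic is that mass of the $u_i$ can ``leak to infinity'' and contribute a nonzero limit, but since the kernel $|x-y|^{-n-2s}$ becomes essentially $|y|^{-n-2s}$ as $|y|\to\infty$, any such leak must be $x$-independent. The two ingredients it rests on are a uniform $\cL_{2s}$ bound on $\{u_i\}$ and a careful kernel expansion at infinity.

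First I would establish the uniform bound $\sup_i \norm{u_i}_{\cL_{2s}} < \infty$. Pick a reference point $x_0 \in \Omega$ and a radius $\rho$ with $B_{2\rho}(x_0) \subset \Omega$. The $C^{2s+\beta}_{loc}(\Omega)$ convergence supplies a uniform $C^{2s+\beta}$ bound on $u_i$ in $B_{2\rho}(x_0)$, which uniformly controls the principal value piece of $\flap u_i(x_0)$ on $B_\rho(x_0)$. Since $u_i(x_0) \to u(x_0)$ and $\flap u_i(x_0)$ is a convergent (hence bounded) sequence, the tail integral $\int_{|y-x_0|>\rho} u_i(y) |x_0-y|^{-n-2s}\,dy$ is uniformly bounded in $i$, and a routine kernel comparison with $(1+|y|)^{-n-2s}$ gives the claimed $\cL_{2s}$ bound.

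For any $x \in \Omega$ and any $R > 0$, decompose
\begin{equation*}
\flap u_i(x) = C_{n,s}\,\mathrm{P.V.}\!\!\int_{B_R(x)}\!\frac{u_i(x)-u_i(y)}{|x-y|^{n+2s}}dy + C_{n,s}\, u_i(x)\, I_R - C_{n,s}\!\int_{B_R(x)^c}\!\frac{u_i(y)}{|x-y|^{n+2s}}dy,
\end{equation*}
where $I_R := \int_{|y-x|>R} |x-y|^{-n-2s}dy$. Since $x \in \Omega$, a small neighborhood of $x$ lies in $\Omega$; so the $C^{2s+\beta}_{loc}(\Omega)$ convergence together with the zero extension on $\Omega^c$ and dominated convergence lets the principal value term pass to its $u$-analogue. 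Combined with $u_i(x) \to u(x)$ and the hypothesis that $\flap u_i(x)$ converges, the tail integral has a limit and one reads off
\begin{equation*}
c(x) = \lim_{i \to \infty}\int_{B_R(x)^c}\frac{u_i(y) - u(y)}{|x-y|^{n+2s}}\,dy,
\end{equation*}
a formula whose right-hand side is manifestly independent of $R$.

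The main obstacle is proving $c(x)$ is constant in $x$. Given $x_1, x_2 \in \Omega$, I would introduce an auxiliary radius $R_0 \gg R + |x_1| + |x_2|$ and split $c(x_1) - c(x_2)$ into the contribution from $\{|y| \leq R_0\}$ and from $\{|y| > R_0\}$. For fixed $R_0$ the near contribution vanishes as $i \to \infty$, by $C^{2s+\beta}_{loc}$ convergence on the bounded region $\{|y|\leq R_0\}\cap\Omega$ together with $u_i = u = 0$ on $\Omega^c$. For the far contribution, the elementary expansion $\bigl||x_1-y|^{-n-2s} - |x_2-y|^{-n-2s}\bigr| \leq C(x_1,x_2)\,|y|^{-n-2s-1}$ (valid for $|y| \geq R_0$), combined with $\sup_i \norm{u_i}_{\cL_{2s}} < \infty$, produces an $O(R_0^{-1})$ estimate uniform in $i$; sending $R_0 \to \infty$ gives $c(x_1) = c(x_2)$. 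Finally, because $u_i \geq 0$ and $u_i \to u$ pointwise (on $\Omega$ by $C^{2s+\beta}_{loc}$ convergence, trivially on $\Omega^c$), Fatou's lemma applied to $u_i(y)|x-y|^{-n-2s}\chi_{B_R(x)^c}(y)$ yields $c \geq 0$. Setting $b := c$ finishes the proof.
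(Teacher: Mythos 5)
Your argument follows the same route as the paper's: split $\flap u(x)-\flap u_i(x)$ into a near-field part over a large ball and a far-field tail, kill the near-field part using the $C^{2s+\beta}_{loc}(\Omega)$ convergence, and identify the limit of the tail as an $x$-independent constant $b\ge 0$. You are in fact more self-contained than the paper on the far-field side: the paper simply invokes ``the same argument in Du--Jin--Xiong--Yang'' for the existence and $x$-independence of the tail's limit, whereas you supply the two ingredients that argument needs, namely the uniform bound $\sup_i\norm{u_i}_{\cL_{2s}}<\infty$ (correctly extracted from nonnegativity, the local $C^{2s+\beta}$ bounds near a fixed interior point, and the boundedness of the convergent sequence $\flap u_i(x_0)$) and the kernel expansion $\bigl||x_1-y|^{-n-2s}-|x_2-y|^{-n-2s}\bigr|\le C(x_1,x_2)\,|y|^{-n-2s-1}$. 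The Fatou argument for $b\ge 0$ is also fine.

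The step that does not hold as written --- in your proposal and, it should be said, in the paper's own proof --- is the claim that the near-field contribution vanishes. Dominated convergence on $B_{R}(x)\setminus B_\varepsilon(x)$ (and on $\{|y|\le R_0\}\cap\Omega$ in your constancy argument) requires an integrable dominating function for the $u_i$ up to $\partial\Omega$, and neither $C^{2s+\beta}_{loc}(\Omega)$ convergence (which controls nothing near the boundary, since ``loc'' refers to compact subsets of the open set $\Omega$) nor your uniform $\cL_{2s}$ bound (an integral bound, compatible with concentration) supplies one. Indeed the hypotheses as stated do not exclude mass concentrating at a boundary point: take $u\equiv 0$ and $u_i=\phi_i\ge 0$ smooth, supported in $\Omega\cap B_{1/i}(y_0)$ for a fixed $y_0\in\partial\Omega$, with $\int\phi_i=1$. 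Then $\phi_i\to 0$ in $C^{2s+\beta}_{loc}(\Omega)$, $\sup_i\norm{\phi_i}_{\cL_{2s}}\le 1$, and for each $x\in\Omega$ one computes $\flap\phi_i(x)\to -C_{n,s}|x-y_0|^{-n-2s}$, which converges pointwise but is not of the form $\flap u(x)-b$ with $b$ constant. So the conclusion genuinely requires an additional hypothesis --- for instance a uniform $L^\infty$ bound on the $u_i$ on bounded subsets of $\overline{\Omega}$, which is exactly what the rescaled solutions in Section 3 satisfy via $v_k\le 2^{2s/(p-1)}$ --- and under such a hypothesis your DCT steps, and with them the rest of your proof, go through.
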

\begin{proof}
    Fix any $x\in\Omega$, consider $R\gg|x|+1$.
    \begin{align*}
        \flap u(x)-\flap u_i(x) & = c_{n,s}\int _{B_R}\frac{(u-u_i)(x)-(u-u_i)(y)}{|x-y|^{n+2s}}dy \\ &+c_{n,s}\int _{B_R^c}\frac{(u-u_i)(x)-u(y)}{|x-y|^{n+2s}}dy\\
        & + c_{n,s}\int _{B_R^c}\frac{u_i(y)}{|x-y|^{n+2s}}dy\\
        & := A_i(x,R) + E_i(x,R) + F_i(x,R).
    \end{align*}
    For $E_i$, since $u\in \mathcal{L}_{2s}$, 
    \begin{equation}
        \lim _{R\to\infty}\lim _{i\to\infty}E_i(x,R) = -c_{n,s}\lim _{R\to\infty}\int _{B_R^c}\frac{u(y)}{|x-y|^{n+2s}}dy =0.
    \end{equation}
    The consideration of $A_i$ is a little bit different with that in \cite{Du2023blowup}. In our cases, $B_R$ is not contained in $\Omega$ for sufficiently large $R$. Fix $\varepsilon >0$ such that $B_\varepsilon(x)\subset \Omega$. Then
    \begin{align*}
        A_i(x,R) & = c_{n,s}\int _{B_R\backslash B_\varepsilon(x)}\frac{(u-u_i)(x)-(u-u_i)(y)}{|x-y|^{n+2s}}dy \\
        &+c_{n,s}\int _{B_\varepsilon(x)}\frac{(u-u_i)(x)-(u-u_i)(y)}{|x-y|^{n+2s}}dy
    \end{align*}
    By dominated convergence theorem, the first term converges to 0 as $i\to\infty$. For the second term,
    \[
    \lim _{i\to\infty} c_{n,s}\int _{B_\varepsilon(x)}\frac{(u-u_i)(x)-(u-u_i)(y)}{|x-y|^{n+2s}}dy\le c_{n,s}\lim _{i\to\infty}[u-u_i]_{C^{2s+\beta}(B_\varepsilon(x))}\varepsilon ^\beta = 0.
    \]
    Hence $\lim _{i\to\infty}A_i(x,R)=0$. Therefore 
    \begin{equation}
        \lim _{R\to\infty}\lim _{i\to\infty}A_i(x,R)=0.
    \end{equation}
    Then the same argument in \cite{Du2023blowup} implies $\lim _{i\to\infty}F_i(x,R)$ exists and
    \[
    \lim _{R\to\infty}\lim _{i\to\infty}F_i(x,R) =\lim _{R\to\infty}\lim _{i\to\infty}F_i(0,R) = :b\ge 0.
    \]
\end{proof}

{\bf{Acknowledgements.}} 
 Guo is partially supported by the National Natural Science Foundation of China (Grant No. 12501145), the Natural Science Foundation of Shanghai (No. 25ZR1402207),   the China Postdoctoral Science Foundation (No. 2025T180838 and No. 2025M773061), the Postdoctoral Fellowship Program of CPSF (No. GZC20252004), and the Institute of Modern Analysis-A Frontier Research Center of Shanghai.
Li and Ouyang are partially supported by the National Natural Science Foundation of China (Grant No. W2531006, 12250710674 and 12031012) and the Institute of Modern Analysis-A Frontier Research Center of Shanghai.
 \medskip

{\bf{Date availability statement:}} Data will be made available on reasonable request.
\medskip

{\bf{Conflict of interest statement:}} There is no conflict of interest.

\bibliography{ref.bib}

\bigskip

Yahong Guo

School of Mathematical Sciences

Shanghai Jiao Tong University

Shanghai, 200240, P.R. China

yhguo@sjtu.edu.cn
\medskip

Congming Li

School of Mathematical Sciences

Shanghai Jiao Tong University

Shanghai, 200240, P.R. China

congming.li@sjtu.edu.cn
\medskip

Yugao Ouyang

School of Mathematical Sciences

Shanghai Jiao Tong University

Shanghai, 200240, P.R. China

ouyang1929@sjtu.edu.cn

\end{document}